\theoremstyle{plain}
\newtheorem{theorem}{Theorem}[section]
\newtheorem{corollary}[theorem]{Corollary}
\newtheorem{definition}[theorem]{Definition}
\newtheorem{question}[theorem]{Question}
\newtheorem{lemma}[theorem]{Lemma}
\newtheorem{remark}[theorem]{Remark}
\newcommand{\Qset}{\mathbb{Q}}
\newcommand{\quash}[1]{}
\def\arrowdown#1#2{\Big\downarrow \rlap{$\vcenter{\hbox{$\scriptstyle#2$}}$}
{\hbox to -10pt{\hss{$\vcenter{\hbox{$\scriptstyle#1$}}$}}}}
\def\arrowup#1#2{\Big\uparrow \rlap{$\vcenter{\hbox{$\scriptstyle#2$}}$}
{\hbox to -10pt{\hss{$\vcenter{\hbox{$\scriptstyle#1$}}$}}}}
\numberwithin{equation}{section}
\newcommand{\calA}{\mathcal{A}}
\newcommand{\usePsi}[1]{} 
\renewcommand{\digamma}{\Psi}
\newtheorem{hypothesis}[theorem]{Hypothesis}
\begin{document}
\title [On the radii of Voronoi cells of rings of integers]
{On the radii of Voronoi cells of rings of integers }
\vspace*{-1cm}

\date{\today}

\author[F.M. Bleher]{Frauke M. Bleher}
\address{F.B.: University of Iowa, Iowa City, IA 52242, USA}
\email{frauke-bleher@uiowa.edu}
\thanks{The first author was supported in part by Simons Foundation grant No. 960170 and NSF FRG grant DMS-2411703.}

\author[T. Chinburg]{Ted Chinburg}
\address{T.C.: University of Pennsylvania, Philadelphia, PA 19104, USA}
\email{ted@math.upenn.edu}
\thanks{The second author was supported in part by Simons Foundation grant No. MP-TSM-00002279 and  NSF FRG grant DMS-2411702.}

\author[X. Ding]{Xuxi Ding}
\address{X.D.: University of Pennsylvania, Philadelphia, PA 19104, USA}
\email{xuxiding@math.upenn.edu}

\author[N. Heninger]{Nadia Heninger}
\address{N.H.: University of California at San Diego, San Diego, CA 92093}
\email{nadiah@cs.ucsd.edu}
\thanks{The fourth author was supported in part by NSF FRG grant DMS-2411704.}

\author[D. Micciancio]{Daniele Micciancio}
\address{D.M.: University of California at San Diego, San Diego, CA 92093}
\email{dmicciancio@cs.ucsd.edu}
\thanks{The fifth author was supported in part by NSF FRG grant DMS-2411704.}

\begin{abstract}
Since the time of Minkowski a basic problem in number theory has been to find lower bounds for the absolute value $\Delta(K)$ of the discriminant of a number field $K$ in terms of the degree $n(K)$ of $K$.  In this paper we study another measure of the size of $K$ given by the covering radius $\mu(K)$ of the ring of integers $O_K$ of $K$.  Here $\mu(K)$ is the $L^2$ radius $||V_2(K)||_2$ of the $L^2$ Voronoi cell $V_2(K)$ of  $O_K$, where $V_2(K)$ is the set of points in $\mathbb{R} \otimes_{\mathbb{Q}} K$ that are at least as close to the origin as they are to any non-zero element of $O_K$.  To put a limit on what lower bounds one can prove for $\mu(K)$ in terms of $n(K)$, we study infinite families of $K$ of increasing degree for which $\mu(K)$ can be bounded above by an explicit power of $n(K)$.   We also study analogous questions when the $L^2$ norm is replaced by the $L^\infty$ norm.
\end{abstract}

\maketitle

\section{Introduction}

Two measures of the size of a number field $K$ are the absolute value $\Delta(K)$ of its discriminant and the degree $n(K)$ of $K$ over $\mathbb{Q}$. This paper is about a third measure of the size of $K$ given by the covering radius $\mu(K)$ of the ring of integers $O_K$ of $K$.  Here $\mu(K)$ is the radius $||V_2(K)||_2$ in the $L^2$ metric
on $\mathbb{R} \otimes_{\mathbb{Q}} K$ of the Voronoi cell $V_2(K)$ of $O_K$.  The $L^2$ metric on $\mathbb{R} \otimes_{\mathbb{Q}} K$ arises from the real and complex embeddings of $K$.  The Voronoi cell $V_2(K)$ is the set of elements of
$\mathbb{R} \otimes_{\mathbb{Q}} K$ whose distance to the origin is less than or equal to its distance to any non-zero element of the integers $O_K$ of $K$.
The study of discriminants amounts to studying the volume $\mathrm{Vol}(V_2(K))$ of $V_2(K)$.  This is because $V_2(K)$ is the closure of a fundamental domain for the translation action of $O_K$ on $\mathbb{R} \otimes_{\mathbb{Q}} K$, so 
\begin{equation}
\label{eq:delrel}
\Delta(K) = 2^{2 r_2(K)} \mathrm{Vol}(V_2(K))^2
\end{equation}
 where $2r_2(K) \le n(K)$ is the number of non-real complex embeddings of $K$.  

Much of the research on discriminants since the work of Minkowski has focused on finding lower bounds for $\Delta(K)$ in terms of the degree $n(K)$.  A famous result of Golod and Shafarevitch \cite{GS}  is that there is a constant $c > 1$ such that there are $K$ of arbitrarily large degree $n(K)$ such that the root discriminant $\delta(K) = \Delta(K)^{1/n(K)}$ is bounded above by $c$.    Let $c_0$ be the infimum of all $c$ for which this is true.  There is a large literature on lower bounds for $c_0$, beginning with techniques from the geometry of numbers and followed by analytic methods that exploit the appearance of $\Delta(K)$ in the functional equation of the zeta function of $K$ (see \cite{O}, \cite{Bar}, \cite{KP1} and their references).   In view of (\ref{eq:delrel}), $c_0$ is the infimum of all $c > 0$ such that there  are number fields $K$ of arbitrarily large degree for which $\mathrm{Vol}(V_2(K)) \le 2^{-r_2(K)} c^{n(K)/2}$.

The primary difference in studying lower bounds for $||V_2(K)||_2$ rather than for $\delta(K)$ as a function of $n(K)$  is that one needs additional control on the ring of integers $O_K$ in order to analyze $V_2(K)$.  The following is a very strong question about $||V_2(K)||_2$ inspired by the above result of Golod and Shafarevitch:

\begin{question}
\label{q:firstquest1}
Is there a constant $d > 0$ such that there are number fields $K$ of arbitrarily large degree $n(K)$ such that $||V_2(K)||_2 \le d \sqrt{n(K)}$?    If so, what is the infimum  $d_2$ of the set of all such $d$?
\end{question}

The choice of $d \sqrt{n(K)}$ here arises from the fact that such an upper bound is the minimal one that is consistent with the existence of the constant $c_0$ above.  To explain this, if $||V_2(K)||_2 \le d \sqrt{n(K)}$ then $V_2(K)$ is contained in a ball of radius $d \sqrt{n(K)}$ and has $L^2$ volume bounded by that of this ball.  On the other hand, the constant $c_0$ above is connected with whether there are fields of arbitrarily large degree having $V_2(K)$ of very small volume.  A calculation of the volume of  the $L^2$ ball of radius $d \sqrt{n(K)}$ shows that in fact, if there is a  $d$  as in Question \ref{q:firstquest1}, then there is a lower bound for $d$ in terms of $c_0$.   (See the proof of Theorem \ref{thm:volbound} below.)  However, a proof that there is a constant $d$ as in Question \ref{q:firstquest1} seems beyond present methods.  So instead we ask the following question.

\begin{question}
\label{q:secondeq}
What is the infimum  $\nu_2$ of all  constants $\nu > 0$ such that there exist number fields 
$K$ of arbitrarily large degree $n(K)$ for which $||V_2(K)||_2 \le n(K)^{\nu}$?
\end{question}

Note that if there is any $d$ as in Question \ref{q:firstquest1}, then $\nu_2 = 1/2$.  We can now state our main result:

\begin{theorem}
\label{thm:tauthm1}  The constant $\nu_2$ is well defined and satisfies
\begin{equation}
\label{eq:taubound} 
1/2 \le \nu_2 \le \frac{1}{2}+ \mathrm{log}(\sqrt{5}/2)/\mathrm{log}(2) = 0.6609...
\end{equation}
\end{theorem}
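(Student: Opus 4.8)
The plan is to prove the two halves of \eqref{eq:taubound} by quite different means: the lower bound $\nu_2\ge 1/2$ is an elementary volume comparison, while the upper bound requires exhibiting one explicit infinite family of number fields and estimating the $L^2$ radius of its Voronoi cells. That $\nu_2$ is well defined, i.e.\ that the set of admissible exponents is nonempty, will come out of the construction used for the upper bound, so I would handle that point together with the upper bound.

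For the lower bound I would argue by contradiction. Suppose some $\nu<1/2$ is admissible, so there are number fields $K_i$ with $n_i:=n(K_i)\to\infty$ and $||V_2(K_i)||_2\le n_i^{\nu}$. Then $V_2(K_i)$ lies in the closed $L^2$-ball of radius $n_i^{\nu}$ in $\mathbb{R}\otimes_{\mathbb{Q}}K_i\cong\mathbb{R}^{n_i}$, so, using $\Gamma(n/2+1)\ge(n/2e)^{n/2}$,
\[
\mathrm{Vol}(V_2(K_i))\le\frac{\pi^{n_i/2}}{\Gamma(n_i/2+1)}\,n_i^{\nu n_i}\le\bigl(2\pi e\,n_i^{2\nu-1}\bigr)^{n_i/2}.
\]
On the other hand \eqref{eq:delrel} together with Minkowski's discriminant bound (which gives $\Delta(K)\ge c_0^{\,n(K)}$ for an absolute constant $c_0>2$ once $n(K)$ is large) yields
\[
\mathrm{Vol}(V_2(K_i))=2^{-r_2(K_i)}\Delta(K_i)^{1/2}\ge 2^{-n_i/2}c_0^{\,n_i/2}=(c_0/2)^{n_i/2},\qquad c_0/2>1.
\]
Since $2\nu-1<0$, the factor $2\pi e\,n_i^{2\nu-1}$ tends to $0$, so for $i$ large these two bounds contradict each other. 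Hence no $\nu<1/2$ is admissible, and once the admissible set is known to be nonempty this gives $\nu_2\ge 1/2$.

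For the upper bound, put $\nu_0:=\tfrac12+\log(\sqrt5/2)/\log2$; unwinding logarithms gives the cleaner form $\nu_0=\tfrac12\log_2(5/2)$, so that $2\nu_0-1=\log_2(5/4)$. It suffices to produce one infinite tower of number fields $K_0\subset K_1\subset K_2\subset\cdots$, each step a quadratic extension, with $n(K_k)=2^k n(K_0)$ and $||V_2(K_k)||_2\le C\,n(K_k)^{\nu_0}$ for some constant $C$: for any $\nu>\nu_0$ the inequality $C\,n(K_k)^{\nu_0}\le n(K_k)^{\nu}$ then holds for all large $k$, so $\nu$ is admissible, and letting $\nu\downarrow\nu_0$ gives $\nu_2\le\nu_0$ (and shows the admissible set is nonempty). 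Two quantities must be controlled recursively along the tower: the covolume of $O_{K_k}$ in $\mathbb{R}\otimes_{\mathbb{Q}}K_k$, equivalently $\Delta(K_k)$ by \eqref{eq:delrel}, and the covering radius $\mu(K_k)=||V_2(K_k)||_2$. One takes the tower \emph{unramified} over $K_0$ (such towers exist over a suitable base $K_0$ by Golod--Shafarevich), so the root discriminant stays equal to $\delta(K_0)$ and the covolume side never becomes the binding constraint. For the covering radius one writes $O_{K_k}$ as a rank-two $O_{K_{k-1}}$-module, say $O_{K_k}=O_{K_{k-1}}\oplus\beta_k O_{K_{k-1}}$ with $\beta_k^2-t_k\beta_k+u_k=0$ and $t_k^2-4u_k$ a unit, computes the induced quadratic form on $O_{K_{k-1}}\oplus O_{K_{k-1}}$ place by place, and extracts a recursion $\mu(K_k)^2\le(\tfrac52+\varepsilon_k)\,\mu(K_{k-1})^2$ with $\varepsilon_k$ small. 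Iterating, $\mu(K_k)^2\le C'\prod_{j\le k}(\tfrac52+\varepsilon_j)\le C''\,(5/2)^k=C''\,n(K_k)^{\log_2(5/2)}$ provided $\prod_j(1+\tfrac25\varepsilon_j)$ grows subexponentially in $k$, which gives $||V_2(K_k)||_2\le\sqrt{C''}\,n(K_k)^{\nu_0}$.

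The heart of the matter---and the source of the value $\tfrac52$---is this last recursion. Place by place over the archimedean places $v$ of $K_{k-1}$, the form attached to $O_{K_{k-1}}\oplus\beta_k O_{K_{k-1}}$ becomes, after a (non-isometric) shear in the $\beta_k$-coordinate, the sum of $2\,|a'|^2$ on one copy of $O_{K_{k-1}}$ and $\tfrac12\,|\sigma_v(t_k^2-4u_k)|\,|b|^2$ on the other; the product of these two scalars is $|\sigma_v(t_k^2-4u_k)|$, whose product over $v$ is $|N_{K_{k-1}/\mathbb{Q}}(t_k^2-4u_k)|=1$. Thus if $t_k^2-4u_k$ could be chosen with all archimedean absolute values equal to $1$, one would get exactly $\mu(K_k)^2\le 2\mu(K_{k-1})^2+\tfrac12\mu(K_{k-1})^2=\tfrac52\mu(K_{k-1})^2$: the coefficient $2$ is structural (the two conjugate embeddings over each place) and the $\tfrac12$ is then forced by the covolume identity, so $\tfrac52$ is the floor for this style of construction---consistent with the conjectural $\nu_2=1/2$ of Question \ref{q:firstquest1} being out of reach by these methods. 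Since no nontrivial unit has all absolute values $1$ (Kronecker), one actually picks up a ``spread penalty'' $\varepsilon_k$ measuring how unbalanced the unit $t_k^2-4u_k$ is, and the real work is to exhibit an infinite unramified tower in which the generators $\beta_k$ can be chosen so that these penalties cost only a subexponential factor upon iteration (e.g.\ $\varepsilon_k=O(1/\log n(K_{k-1}))$, which by lower bounds of Dobrowolski type for heights of algebraic units is about the best one can hope for and is amply good enough). I expect this existence statement---rather than the place-by-place computation of the quadratic form---to be the main obstacle; the remaining ingredients are the ball-volume formula and elementary facts about covering radii of orthogonal direct sums and of lattices of finite index in one another.
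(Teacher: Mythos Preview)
Your lower bound argument is correct and differs from the paper's: you use a volume comparison with Minkowski's discriminant bound, whereas the paper (Lemma~\ref{lem:arithgeom}) uses the arithmetic--geometric mean inequality to show that every nonzero $\alpha\in O_K$ has $\|\alpha\|_2\ge\sqrt{n(K)/2}$, which bounds the packing radius and hence the covering radius from below. Both routes yield $\nu_2\ge 1/2$.

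For the upper bound, however, your plan has a genuine gap. You propose an unramified Golod--Shafarevich tower $K_0\subset K_1\subset\cdots$ refined into quadratic steps, with a recursion $\mu(K_k)^2\le(5/2+\varepsilon_k)\mu(K_{k-1})^2$ and the spread penalties $\varepsilon_k$ controlled via ``Dobrowolski-type'' bounds. But Dobrowolski gives \emph{lower} bounds on the height of the unit $t_k^2-4u_k$; you need \emph{upper} bounds, and in a Golod--Shafarevich tower you have no freedom to choose the successive quadratic steps or their generators, so there is no mechanism to force these discriminant units to be nearly balanced at the archimedean places. The existence statement you flag as ``the main obstacle'' is not something current methods supply, and without it the recursion gives nothing.

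The paper avoids this entirely by a different construction. It does \emph{not} use a Golod--Shafarevich tower or any recursion on covering radii. Instead it takes a specific two-step tower $\mathbb{Q}\subset L\subset N$ with $L=\mathbb{Q}(\sqrt{p_1},\dots,\sqrt{p_\ell})$ and $N=L(\sqrt{w}:w\in\mathcal{S}_0)$ for certain units $w\equiv 1\bmod 4$, and writes down an explicit fundamental domain for a finite-index sublattice $O_{N,\epsilon}\subset O_N$. The $(5/4)^{\#\mathcal{S}_0}$ comes from the binomial expansion $\sum_{T\subset\mathcal{S}_0}2^{-2\#T}$, using the orthogonality of the subspaces $(\mathbb{R}\otimes_{\mathbb{Q}}L)\cdot q_T$ (Lemma~\ref{lem:perp}) and the factor $2^{-\#T}$ in $\eta_T=\prod_{w\in T}\frac{1+\sqrt{w}}{2}$. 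Crucially, the ``spread'' of the units $w$ is \emph{not} controlled; instead a Minkowski argument (Lemma~\ref{lem:keypoint}) produces a short element $h_T\in O_Lq_T$ with $\|h_T\|_\infty\le\delta(L)^{1/2}$, and the construction in \cite{BC} ensures $\delta(L)=\delta(N)\le n(N)^\epsilon$. This Minkowski step is what replaces your unattainable height control.
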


The lower bound in this theorem is a consequence of the arithmetic geometric mean inequality together with the fact that norms of elements of $O_K$ lie in $\mathbb{Z}$.  
To show the upper bound,  we must construct a family of number fields of increasing degree for which the size of $||V_2(K)||_2$ can be suitably controlled as a function of $n(K)$.  

A natural family of fields to consider would be cyclotomic fields.   However, we will show in Corollary \ref{cor:cyclocor} that families of cyclotomic fields cannot lead to upper bounds on $\nu_2$ in Question \ref{q:secondeq} that are smaller than $1$.  Instead, we will consider  families of fields $N$ of small root discriminant that were constructed in \cite{BC}.  These $N$ were originally constructed to solve a problem of Peikert and Rosen in \cite{PR} concerning the efficient construction of infinite families of fields of increasing degree and small root discriminant as a function of the degree.  

The $N$ we consider are the top of  two-step towers $\mathbb{Q} \subset L \subset N$ of elementary abelian two-extensions $N/L$ and $L/\mathbb{Q}$.   The top step $N/L$ in each of these towers is unramified at all finite primes.  Equivalently, the root discriminants of the two fields $N$ and $L$  forming the top step are equal.  One byproduct of our work is a description in Theorem \ref{thm:biggest} below of the square roots one must adjoin to an arbitrary number field  to construct the maximal elementary abelian two-extension of that field that is unramified over all finite primes.  

We now return to considering an arbitrary number field $K$.   In addition to the $L^2$ norm on $\mathbb{R} \otimes_{\mathbb{Q}} K$, one can consider the $L^\infty$ norm which arises as the sup of the Euclidean absolute values of the images of elements under the algebra homomorphisms to $\mathbb{C}$ induced by the real and complex embeddings of $K$.  One can define an $L^\infty$ Voronoi cell $V_\infty(K)$ to be the set of elements of $\mathbb{R} \otimes_{\mathbb{Q}} K$ that are at least as close in the $L^\infty$ norm to $0$ as to any non-zero element of $O_K$.
Our methods lead to the following result concerning the $L^\infty $ radius $||V_\infty(K)||_\infty$ of $V_\infty(K)$.  

\begin{theorem}
\label{thm:tauinfthm1}  Let  $\nu_\infty$ be the infimum of all  constants $\nu > 0$ such that there exist number fields 
$K$ of arbitrarily large degree $n(K)$ for which $||V_\infty(K)||_\infty \le n(K)^{\nu}$.
Then 
\begin{equation}
\label{eq:tauinfbound} 
0 \le \nu_\infty \le \mathrm{log}(3/2)/\mathrm{log}(2) = 0.5849...
\end{equation}
\end{theorem}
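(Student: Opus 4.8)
The plan is to obtain the upper bound for $\nu_\infty$ from the \emph{same} family of fields used to prove Theorem \ref{thm:tauthm1}, carrying out the covering–radius estimate in the $L^\infty$ norm rather than the $L^2$ norm and exploiting that the $L^\infty$ norm behaves better under the lattice operations involved. The lower bound $\nu_\infty\ge 0$ is immediate, since $\nu_\infty$ is the infimum of a set of positive reals that is nonempty once the upper bound is known. One can note that $|N_{N/\mathbb{Q}}(\alpha)|\ge 1$ for $0\ne\alpha\in O_N$ forces $||\alpha||_\infty\ge 1$ by the arithmetic--geometric mean inequality, hence $||V_\infty(N)||_\infty\ge\tfrac12$; but, in contrast to the $L^2$ case, this gives no growth in $n(N)$, so nothing beyond $\nu_\infty\ge 0$ comes out this way, and the gap left open is whether $\nu_\infty=0$.

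For the upper bound I would let $N$ run over the tops of the two-step towers $\mathbb{Q}\subset L\subset N$ of Theorem \ref{thm:tauthm1}: $L/\mathbb{Q}$ and $N/L$ are elementary abelian two-extensions, $N/L$ is unramified at every finite prime (so $\delta(N)=\delta(L)$), and in this family the auxiliary field $L$ is small, of degree $n(N)^{o(1)}$ with discriminant and regulator also $n(N)^{o(1)}$. Write $N=L(\sqrt{\beta_1},\dots,\sqrt{\beta_b})$ with $[N:L]=2^b$; by unramifiedness and Theorem \ref{thm:biggest}, each $\beta_i$ may be chosen to generate the square $\mathfrak{a}_i^{\,2}$ of a fractional ideal of $L$ and --- crucially --- to have archimedean absolute values bounded by an absolute constant, uniformly over the family. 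The geometric input is then that $O_N$ contains the sublattice $\bigoplus_{S\subseteq\{1,\dots,b\}}\mathfrak{a}_S^{-1}\prod_{i\in S}\sqrt{\beta_i}$ (with $\mathfrak{a}_S=\prod_{i\in S}\mathfrak{a}_i$), that these $2^b$ summands are mutually orthogonal for the trace form (trace orthogonality of the $\prod_{i\in S}\sqrt{\beta_i}$ over $L$), and that after the natural rescaling each summand has the \emph{same} covolume as $O_L$, so the pieces do not degenerate as $b\to\infty$. Since enlarging a lattice only decreases its covering radius, it suffices to bound the $L^\infty$ covering radius of this explicit sublattice.

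The main step is to redo the estimate of Theorem \ref{thm:tauthm1} in the $L^\infty$ norm. Over each archimedean place $\tau$ of $L$, the $L^\infty$ norm on $\mathbb{R}\otimes_{\mathbb{Q}}N$, restricted to the decomposition above, is a maximum over the $2^b$ sign patterns realized by the places of $N$ above $\tau$, i.e.\ a Hadamard/character sum over the subsets $S$. Consequently an $L^\infty$ approximation step combines the contributions of the $2^b$ summands through a \emph{maximum} rather than through a sum of squares as in the $L^2$ case, and this is exactly what allows the per-square-root growth factor $\sqrt{5/2}$ of Theorem \ref{thm:tauthm1} to be replaced by $3/2$. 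Carrying the estimate through, and using $n(N)=2^{a+b}\ge 2^{\,b}$, gives
\[
||V_\infty(N)||_\infty\;\le\;C\,(3/2)^{\,b}\,||V_\infty(L)||_\infty\;\le\;C'\,n(N)^{\,\mathrm{log}(3/2)/\mathrm{log}(2)\,+\,o(1)},
\]
where $C,C'$ and the $o(1)$ depend only on $L$ and are therefore $n(N)^{o(1)}$. Hence $||V_\infty(N)||_\infty\le n(N)^{\nu}$ for every $\nu>\mathrm{log}(3/2)/\mathrm{log}(2)$ and all large $N$ in the family, which yields $\nu_\infty\le\mathrm{log}(3/2)/\mathrm{log}(2)$.

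The step I expect to be the main obstacle is making the constant come out to \emph{exactly} $3/2$ per adjoined square root. The naive argument --- round the coefficient of each $\sqrt{\beta_i}$ independently and apply the triangle inequality --- only produces a factor $3$ per square root, hence the weaker exponent $\mathrm{log}(3)/\mathrm{log}(2)$. To do better one must (i) choose the generators $\beta_i$ of $\mathfrak{a}_i^{\,2}$ to be simultaneously well-balanced at all archimedean places of $L$ (this is where the control from \cite{BC} and Theorem \ref{thm:biggest}, together with the smallness of the regulator of $L$, enters), and (ii) exploit the Hadamard/orthogonal structure so that the deep hole coming from the ``$\sqrt{\beta_i}$ direction'' is not simply stacked on top of a full copy of the deep hole of the previous level but is partly absorbed by it; quantifying this cancellation precisely is the technical heart of the argument. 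A more routine point is to verify that $[\,O_N:\bigoplus_S\mathfrak{a}_S^{-1}\prod_{i\in S}\sqrt{\beta_i}\,]$ and $||V_\infty(L)||_\infty$ are both $n(N)^{o(1)}$, so that they are harmlessly absorbed into the exponent. (It is also natural to ask whether cyclotomic fields, excluded for $\nu_2$ by Corollary \ref{cor:cyclocor} because the relevant lattices are extremely skew in the $L^2$ metric, could be used here instead, since their $L^\infty$ geometry is far less pathological; in any case the family of \cite{BC} suffices.)
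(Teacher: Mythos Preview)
Your choice of family and overall strategy are exactly those of the paper, and your treatment of the lower bound is fine (the paper dismisses it in one line). The gap is precisely the one you flag as ``the main obstacle,'' and your proposed fix is not the right one.

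The sublattice you work with, $\bigoplus_{S}\mathfrak{a}_S^{-1}\prod_{i\in S}\sqrt{\beta_i}$, is too coarse; in the \cite{BC} situation the $\beta_i$ are units, so this is just $\bigoplus_T O_L\cdot q_T$ with $q_T=\prod_{w\in T}\sqrt{w}$. With that lattice the triangle-inequality bound on the $L^\infty$ radius is $\sum_T(\text{size of box in }q_T\text{-direction})$, and there are $2^{\#\mathcal{S}_0}\approx n$ summands of comparable size, giving exponent $1$, not $\log_2(3/2)$. The ``Hadamard cancellation'' you hope for does not materialize: the deep hole is a supremum over independent choices of the $q_T$-components, and for any fixed embedding $\sigma$ of $N$ one can align the phases of the terms $\sigma(\alpha_T)\sigma(q_T)$ by choosing the $\alpha_T$, so the worst case really is the full sum. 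Likewise, archimedean balancing of the $\beta_i$ is not what saves the day (indeed, the units $w\in\mathcal{S}_0$ used in \cite{BC} are not well-balanced; the paper handles this separately via a Minkowski argument inside each $(\mathbb{R}\otimes_{\mathbb{Q}}L)q_T$).

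What you are missing is that $O_N$ contains the elements $\eta_T=\prod_{w\in T}\tfrac{1+\sqrt{w}}{2}$, which form an $O_L$-basis of $O_N$ (Theorem~\ref{ex:unitex}). The leading $q_T$-coefficient of $\eta_T$ is $2^{-\#T}$. Consequently, if one uses the filtration by the $q_T$ and reduces from the top down with the $\eta_T$ rather than with the $q_T$, the resulting fundamental domain has coefficients in the $q_T$-direction bounded by $2^{-\#T}$ (Definition~\ref{dfn:funda}). The $L^\infty$ bound then becomes, by the plain triangle inequality and the binomial theorem,
\[
\sum_{T\subset\mathcal{S}_0} 2^{-\#T}\cdot n^{o(1)}\;=\;(1+\tfrac12)^{\#\mathcal{S}_0}\cdot n^{o(1)}\;\le\;n^{\log_2(3/2)+o(1)}.
\]
So the factor $3/2$ is not a cancellation phenomenon at all: it is $1+\tfrac12$, coming directly from the denominator $2$ in the integral basis elements $(1+\sqrt{w})/2$. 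Once you use the $\eta_T$ instead of the $q_T$, the ``naive'' triangle inequality already gives the stated exponent; no further cleverness is needed.
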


Our approach to proving Theorems \ref{thm:tauthm1} and \ref{thm:tauinfthm1} uses fundamental domains for subgroups of finite index in $O_N$ when $N$ is the top of a two-step tower of the kind described above.  
  Let $\epsilon > 0$ be a real constant.  We will construct a $\mathbb{Z}$-basis $\{b_{i,\epsilon}\}_{i = 1}^{n(N)}$ for a subgroup $O_{N,\epsilon}$ of finite index in $O_N$ along with an explicit fundamental domain $\mathcal{F}_{N,\epsilon}$ for the translation action of $O_{N,\epsilon}$ on $\mathbb{R} \otimes_{\mathbb{Q}} N$.   This $\mathcal{F}_{N,\epsilon}$ then contains a fundamental domain for $O_N$.  Theorems \ref{thm:tauthm1} and \ref{thm:tauinfthm1} result from proving upper bounds  on the $L^2$ and $L^\infty$ radii of $\mathcal{F}_{N,\epsilon}$.  The reason for introducing $\epsilon$ into this construction is that as $\epsilon \to 0^+$ these bounds improve.

 We now outline the contents of this paper.
 
In \S \ref{s:fields} we study how to optimize the construction in \cite{BC} of two-step towers of number fields in which each step is an elementary abelian two-extension and the top step is unramified at all finite primes.    
This leads to some open problems concerning units congruent to $1$ mod $4$ and the size of the two-torsion in narrow ideal class groups.   
 
In \S \ref{s:fundament} we prove Theorems \ref{thm:tauthm1} and \ref{thm:tauinfthm1} by constructing the fundamental domains $\mathcal{F}_{N,\epsilon}$ mentioned above.  

In \S \ref{s:cyclocase} we prove a lower bound for $||V_2(K)||_2$ in terms of $\delta(K) $ for arbitrary number fields $K$.  We show that for any $\epsilon > 0$ all but finitely many cyclotomic fields $K$ have $||V_2(K)||_2$ and $\Delta(K)^{1/n(K)}$ greater than or equal to $n(K)^{1 - \epsilon}$. 

\medskip

\textbf{Acknowledgment.} The authors would like to thank Adam Suhl for many useful comments and for checking some of the computations in this paper.

\section{Towers of elementary abelian two-extensions of number fields}
\label{s:fields}
\newif\ifmathcompetitive

The goal of \cite{BC} was to give an efficient construction of an infinite family of number fields $N$ of increasing degree $n(N)$ such that for all $\epsilon > 0$,
the root discriminant $\delta(N) = \Delta(N)^{1/n(N)}$
is bounded by $n(N)^\epsilon$ for all but finitely many $N$ in the family.
The $N$ involved are the top of two-step towers $\Qset\subset L \subset N$ of elementary abelian two-extensions $L/\Qset$ and $N/L$ in which $N/L$ is unramified at all finite primes.  By the multiplicativity of differents in towers, 
the last condition  is equivalent to $\delta(N) = \delta(L)$.

The strategy used in \cite{BC} for constructing $N$ as above for which $\delta(N) \le n(N)^\epsilon$ was to find $L$ for which 
$[N:L] = n(N)/n(L)$ is very large in comparison to $\delta(L)$.  Then 
$$\delta(N) = \delta(L) \quad \mathrm{and}\quad n(N)^\epsilon = n(L)^\epsilon \cdot [N:L]^\epsilon$$
will imply $\delta(N) \le n(N)^\epsilon$ if $\delta(L) \le [N:L]^\epsilon$.

In this section we will be concerned with analyzing the above construction.  

 \subsection{Elementary abelian two-extensions of number fields.}
 \label{s:unramelem}

We begin with a characterization of quadratic extensions for which the root discriminant does not change.   
  
\begin{theorem}\label{thm:main}
\label{thm:quadratic} Let $L$ be a number field and suppose $w \in L^*$.  The following are equivalent:
\begin{enumerate}
\item[i.] The extension $L' = L(\sqrt{w})$ is unramified over all prime ideals of $O_L$.
\item[ii.] The fields $L'$ and $L$ have the same root discriminant.
\item[iii.] There is an element $\beta \in L^*$ such that $\beta^2 w \in 1 + 4 O_L$ and $\beta^2 w O_L$  is the square of an ideal $\mathcal{A}$  of $O_L$.  
\end{enumerate}
Under these conditions, the integers $O_{L'}$ of $L'$ are generated as an $O_L$-module by  $1$, $\frac{1 +  \beta \sqrt{w}}{2}$ and $\mathcal{A}^{-1}  \beta \sqrt{w}$, where $\mathcal{A}^{-1}$ is the inverse ideal to $\mathcal{A}$.
\end{theorem}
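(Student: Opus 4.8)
The statement to prove is Theorem~\ref{thm:quadratic}, which has two parts: the equivalence of (i), (ii), (iii), and the explicit description of $O_{L'}$. The plan is to establish a cycle of implications together with the integral basis simultaneously.

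First I would dispose of (i)$\iff$(ii). The relative different $\mathfrak{d}_{L'/L}$ satisfies $\Delta(L') = N_{L/\Qset}(\mathfrak{d}_{L'/L}) \cdot \Delta(L)^{[L':L]}$, so taking $n(L')$-th roots and using $n(L') = [L':L]\, n(L)$ shows that $\delta(L') = \delta(L)$ exactly when $N_{L/\Qset}(\mathfrak{d}_{L'/L}) = 1$, i.e. $\mathfrak{d}_{L'/L} = O_L$, i.e. $L'/L$ is unramified at all finite primes. (Since $[L':L]=2$, the finite-prime condition is also equivalent to unramifiedness at all primes of $O_L$ in the sense the theorem means.) This is the multiplicativity/conductor-discriminant bookkeeping already invoked in the introduction, so it is routine.

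The heart is (iii)$\implies$(i) together with the integral basis, and then (i)$\implies$(iii). For (iii)$\implies$(i): given $\beta$ with $\alpha := \beta^2 w \in 1 + 4O_L$ and $\beta^2 w O_L = \mathcal{A}^2$, note $L' = L(\sqrt{w}) = L(\sqrt{\alpha})$ since $\sqrt{\alpha} = \beta\sqrt w$. Set $\theta = \frac{1+\sqrt\alpha}{2}$; then $\theta$ satisfies $\theta^2 - \theta + \frac{1-\alpha}{4} = 0$, which is a monic polynomial with coefficients in $O_L$ precisely because $\alpha \equiv 1 \pmod 4$, so $\theta \in O_{L'}$. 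Now I claim the $O_L$-module $M$ generated by $1$, $\theta$, and $\mathcal{A}^{-1}\sqrt\alpha$ is a ring, contains $O_{L'}$, and has discriminant (as a fractional-ideal-decorated lattice) equal to $\Delta(L)^2$ over $\Qset$ — equivalently, relative discriminant $O_L$ — which forces $M = O_{L'}$ and $L'/L$ unramified. The discriminant computation: working locally at each prime $\mathfrak{p}$ of $O_L$, $M\otimes_{O_L} O_{L,\mathfrak{p}}$ is free with basis $1, \varpi^{-v_\mathfrak{p}(\mathcal{A})}\sqrt\alpha$ (where $\varpi$ is a uniformizer), and the relative discriminant of this order is $\det\begin{pmatrix} 2 & \mathrm{tr}(\varpi^{-v}\sqrt\alpha)\\ \mathrm{tr}(\varpi^{-v}\sqrt\alpha) & \mathrm{tr}(\varpi^{-2v}\alpha)\end{pmatrix}$ up to a unit from the change of basis $1,\sqrt\alpha \to 1,\theta$; since $\mathrm{tr}(\sqrt\alpha)=0$ and $\mathrm{tr}(\alpha) = 2\alpha$, the relative discriminant of $O_L[\theta]$ is $(\alpha - 1)/$unit, i.e. generated by $\alpha-1$, whose valuation matches $v_\mathfrak{p}(\mathcal{A}^2) = v_\mathfrak{p}(\alpha)$ away from primes over $2$, and the $1+4O_L$ condition handles the primes over $2$ — scaling the $\sqrt\alpha$ generator down by $\mathcal{A}^{-1}$ exactly removes this discriminant. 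I would organize this as: $M$ is a maximal order because its relative discriminant is trivial, hence $M = O_{L'}$. Checking $M$ is closed under multiplication (the product $\theta \cdot \mathcal{A}^{-1}\sqrt\alpha$ lands back in $M$, using $\sqrt\alpha \cdot \mathcal{A}^{-1}\sqrt\alpha = \alpha \mathcal{A}^{-1} \subseteq \mathcal{A}$ since $\alpha O_L = \mathcal{A}^2$) is a short direct verification.

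For (i)$\implies$(iii): assume $L'/L$ unramified at all finite primes. Then $O_{L'}$ is a projective $O_L$-module of rank $2$ containing $O_L$, so $O_{L'} = O_L \oplus \mathfrak{b}\gamma$ for some fractional ideal $\mathfrak{b}$ and some $\gamma \in O_{L'}$; after adjusting, write $O_{L'} = O_L \oplus \mathfrak{b} (u + v\sqrt w)$. The ``unramified at $2$'' condition forces (after a further translation by $O_L$, which is why one can arrange the ``$+1$'' and the halving) an element of the form $\frac{1+\gamma_0}{2}$ with $\gamma_0^2 \in 1 + 4O_L$; unwinding $\gamma_0 = \beta\sqrt w$ gives $\beta^2 w \in 1 + 4O_L$. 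The condition that $\mathfrak{d}_{L'/L}$ is trivial translates, via the discriminant of the generator, into $\beta^2 w O_L$ being the square of an ideal (namely $\mathfrak{b}^{-1}$, or its appropriate twist). The cleanest route may be local–global: choose $\beta$ prime-by-prime so that $v_\mathfrak{p}(\beta^2 w)$ is even for all $\mathfrak{p}$ (possible since $L'/L$ unramified means the valuation of $w$ is even at odd primes and at primes over $2$ the $1+4O_L$ normalization is available), then patch using the finiteness of the class group — adjusting $\beta$ by an element of $L^*$ to make $\beta^2 w O_L$ an actual square ideal, while preserving the mod-$4$ congruence by multiplying $\beta$ by something $\equiv 1 \pmod{4}$ if needed.

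The main obstacle I anticipate is the bookkeeping at the primes above $2$: showing that unramifiedness there is exactly captured by the existence of a $\beta$ with $\beta^2 w \equiv 1 \pmod 4$, and that one can simultaneously arrange the square-ideal condition without disturbing the mod-$4$ condition. This is the classical theory of ``$\sqrt w$ vs. $\frac{1+\sqrt w}{2}$'' ramification in quadratic extensions, but done relative to a general base $O_L$ where $2$ need not be prime and the class group obstructs turning ``even valuation everywhere'' into ``globally a square ideal.'' Everything at odd primes is straightforward valuation counting. Once the integral basis is pinned down in the (iii)$\implies$(i) direction, the final sentence of the theorem comes for free, since the module $M$ constructed there was shown to equal $O_{L'}$.
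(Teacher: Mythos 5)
Your treatment of (i)$\iff$(ii) matches the paper's (multiplicativity of the different), and your (iii)$\implies$(i) route via a discriminant computation is a genuinely different organization than the paper's. The paper handles (iii)$\implies$(i) at dyadic primes by a purely local Kummer-theoretic computation --- it shows directly that $1+4O_{\mathcal{P}} = (1+2O_{\mathcal{P}})^2 \cup (1+2O_{\mathcal{P}})^2(1+4\gamma_{\mathcal{P}})$ via a filtration argument --- and then separately derives the integral basis by exhibiting two $O_L$-submodules of $O_{L'}$ whose indices are a power of $2$ and odd respectively. Your plan of showing the module $M$ has trivial relative discriminant at every prime (unit-valued at dyadic primes because $\mathrm{disc}(O_L[\theta]/O_L) = (\alpha)$, and unit-valued at odd primes because the $\mathcal{A}^{-1}$ twist cancels $v_{\mathfrak{p}}(\alpha)$) kills both birds at once and is arguably cleaner. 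However, you wrote that the relative discriminant of $O_L[\theta]$ is generated by $\alpha - 1$: that is wrong, and wrong in a way that would sink the argument, since $\alpha - 1 \in 4O_L$ is emphatically \emph{not} a unit at dyadic primes. The discriminant of $x^2 - x + \frac{1-\alpha}{4}$ is $1 - 4\cdot\frac{1-\alpha}{4} = \alpha$, which is a unit at primes over $2$ precisely because $\alpha \in 1 + 4O_L$. With that fix, this direction is sound, and you do not actually need to verify $M$ is closed under multiplication --- it suffices that $M \subseteq O_{L'}$ as an $O_L$-module with trivial discriminant, since $\mathrm{disc}(M) = [O_{L'}:M]^2\,\mathrm{disc}(O_{L'})$.

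The genuine gap is (i)$\implies$(iii). Your outline says that unramifiedness at primes above $2$ ``forces (after a further translation by $O_L$)'' an element $\frac{1+\gamma_0}{2}$ with $\gamma_0^2 \in 1 + 4O_L$, and later that one can preserve the mod-$4$ congruence ``by multiplying $\beta$ by something $\equiv 1 \pmod 4$ if needed,'' but neither claim is justified, and the first one is exactly the hard part of the theorem. What you need is a local statement at each dyadic prime $\mathcal{P}$: that the nontrivial coset of squares in $O_{\mathcal{P}}^*/(O_{\mathcal{P}}^*)^2$ corresponding to the unramified quadratic extension of $L_{\mathcal{P}}$ is represented by an element of $1 + 4O_{\mathcal{P}}$. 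The paper proves this by an Artin--Schreier-style construction: it picks $\gamma_{\mathcal{P}} \in O_L$ whose image in the residue field is not of the form $y^2 - y$, so that $z^2 - z - \gamma_{\mathcal{P}}$ is irreducible mod $\mathcal{P}$ and $L(\sqrt{1+4\gamma_{\mathcal{P}}})$ is inert at $\mathcal{P}$; local Kummer theory then gives $\xi_{\mathcal{P}}^2 \beta'^2 w \in \{1,\, 1 + 4\gamma_{\mathcal{P}}\}$ for some local unit $\xi_{\mathcal{P}}$; finally strong approximation globalizes the $\xi_{\mathcal{P}}$ to a single $\xi \in O_L$. Also note that the paper uses strong approximation (twice), not ``finiteness of the class group,'' to patch: finiteness of the class group does not help you turn ``$v_{\mathfrak{p}}(\beta^2 w)$ is even for all $\mathfrak{p}$'' into ``$\beta^2 w O_L$ is the square of an integral ideal'' --- that latter condition is automatic once the valuations are even and $\beta^2 w \in O_L$ --- and it certainly does not produce the mod-$4$ congruence. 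You have correctly identified where the difficulty lies, but the proposal does not resolve it.
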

\begin{proof}
The equivalence of (i) and (ii) follows from the multiplicativity of differents and the fact that (i) is true if and only if the relative discriminant ideal of $O_{L'}$ over $O_L$ is the ideal $O_L$.  We now show (i) is equivalent to (iii).

Suppose first that $L' = L(\sqrt{w})$ is unramified over all prime ideals $\mathcal{P}$ of  $O_L$.  If $w$ has odd valuation at $\mathcal{P}$ then $L'$ is ramified over $\mathcal{P}$, contrary to hypothesis.  Hence $w O_L$ is the square of an ideal of $O_L$.  The existence of a $\beta' \in L^*$ such that $\beta'^2 w$ generates the square of an ideal that is prime to $2O_L$ follows from the strong approximation theorem. 

  We now show that for each prime ideal $\mathcal{P}$ over $2$ in $O_L$, there is a $\gamma_{\mathcal{P}} \in O_L$ such that 
$\mathcal{P}$ is inert in the quadratic extension $L(\sqrt{1 + 4\gamma_{\mathcal{P}}})$ of $L$.  Pick any $\gamma_{\mathcal{P}} \in O_L$ whose image in the residue field $k(\mathcal{P}) = O_L/\mathcal{P}$ is not of the form $y^2 - y$ for some $y \in k(\mathcal{P})$.  Then $z = (1 + \sqrt{1 + 4\gamma_{\mathcal{P}}})/2$ is a root of the equation $z^2 - z - \gamma_{\mathcal{P}} = 0$, so $z$ is an algebraic integer.  This equation has no roots in the residue field $k(\mathcal{P}) = O_L/\mathcal{P}$ by our choice of $\gamma_{\mathcal{P}}$, so it is irreducible and $L(z) = L(\sqrt{1 + 4\gamma_{\mathcal{P}}})$ is a quadratic extension of $L$ that is inert over $\mathcal{P}$.  
Since $L' = L(\sqrt{\beta'^2 w})$ and $L'/L$ has been assumed to be unramified, we know by Kummer theory over the completion $L_{\mathcal{P}}$ that there is a $\xi_{\mathcal{P}} \in L_{\mathcal{P}}^*$ such that either $\xi_{\mathcal{P}}^2 \beta'^2 w = 1$ or $\xi_{\mathcal{P}}^2 \beta'^2 w = 1 + 4 \gamma_{\mathcal{P}}$.  Here $\xi_{\mathcal{P}}$ has valuation $0$ at $\mathcal{P}$.  Let $O_{\mathcal{P}}$ be the valuation ring of $L_{\mathcal{P}}$.  Applying the strong approximation theorem again, we find there is a $\xi \in O_L$ that is congruent to $\xi_{\mathcal{P}}$ mod $4 O_{\mathcal{P}}$ for every prime ideal $\mathcal{P}$ over $2$.  When we now define 
$\beta = \xi \beta'$ we get that $\beta^2 w \in 1 + 4 O_L$ generates the square of an ideal that is prime to $2O_L$, so (iii) holds.

Conversely, suppose (iii) holds.  Condition (iii) implies that $L' = L(\sqrt{w}) = L(\sqrt{\beta^2 w})$ is unramified over all prime ideals of $O_L$ of odd residue characteristic and  $\beta^2 w = 1 + 4\tau$ for some $\tau \in O_L$.  It will now suffice to show $L(\sqrt{1 + 4\tau})$ is unramified over each prime ideal $\mathcal{P}$ of $O_L$ that divides $2O_L$.  By Kummer theory over
$L_{\mathcal{P}}$, it will suffice to show that 
\begin{equation}
\label{eq:inclus}
1 + 4O_{\mathcal{P}} = (1 + 2O_{\mathcal{P}})^2 \cup (1 + 2O_{\mathcal{P}})^2 \cdot (1+ 4 \gamma_{\mathcal{P}})
\end{equation}
 when $\gamma_{\mathcal{P}}$ is constructed as in the first part of the proof.  
 
 Let $\pi_{\mathcal{P}}$ be a uniformizer in $O_{\mathcal{P}}$.
For $v \in O_{\mathcal{P}}$ we have 
\begin{equation}
\label{eq:veq}(1 + 2v)^2 = 1 + 4(v + v^2)\equiv 1 + 4(v^2 - v)\quad \mathrm{ mod }\quad 4\pi_{\mathcal{P}} O_{\mathcal{P}}.
\end{equation}  There is an isomorphism of groups
$$\frac{(1+ 4 O_{\mathcal{P}})^* }{(1 + 4 \pi_{\mathcal{P}} O_{\mathcal{P}})^* }\to k(\mathcal{P})^+$$
that sends $1 + 4 v $ to the residue class of $v$.   Since $\gamma_\mathcal{P}$ represents the non-trivial
coset of $ \{y^2 - y: y \in k(\mathcal{P})\} $ in $k(\mathcal{P})$, we conclude from (\ref{eq:veq}) that 
$$(1 + 2O_{\mathcal{P}})^2 \cup (1 + 2O_{\mathcal{P}})^2 \cdot (1+ 4 \gamma_{\mathcal{P}})$$
lies in $1 + 4 O_{\mathcal{P}}$ and surjects onto $(1 + 4 O_{\mathcal{P}})^* / (1 + 4 \pi_{\mathcal{P}} O_{\mathcal{P}})^*$.  Hence
it will now be enough to show that 
$$1 + 4 \pi_{\mathcal{P}} O_{\mathcal{P}} \subset (1+ 2 O_{\mathcal{P}})^2.$$
For this it suffices to show that for all $i \ge 1$ one has
$$1 + 4 \pi_{\mathcal{P}}^i O_{\mathcal{P}} \subset (1 + 2 \pi_{\mathcal{P}}^i O_{\mathcal{P}})^2\cdot  (1 + 4 \pi_{\mathcal{P}}^{i+1} O_{\mathcal{P}}).$$
This is clear from 
$$(1 + 2\pi_{\mathcal{P}}^i y)^2 = 1 + 4 \pi_{\mathcal{P}}^i  y + 4 \pi_{\mathcal{P}}^{2i} y^2.$$

This completes the proof that (i) and (iii) are equivalent, so (i), (ii) and (iii) are equivalent by our earlier remarks.

Suppose now that (iii) holds.  If $L= L'$ the last statement of the Lemma is clear, so suppose
$L'$ is quadratic over $L$.  The element $\rho = \frac{1 + \beta \sqrt{w}}{2}$ is a zero of the polynomial $z^2 - z+ \frac{1 - \beta^2 w}{4}$. This polynomial has coefficients in $O_L$, so $\rho$ is an algebraic integer in $L'$.    Since $\mathcal{A}^{-2} \beta^2 w  = O_L$ one has $\mathcal{A}^{-1} \beta \sqrt{w} \subset O_{L'}$.   The relative discriminant over $O_L$ of the $O_L$-submodule $O_L + O_L \mathcal{A}^{-1} \beta \sqrt{w}$ of $O_{L'}$ is $4 \mathcal{A}^{-2} \beta^2 w = 4 O_L$, so the index of this submodule in $O_{L'}$ is a power of $2$.  The discriminant of  the minimal polynomial over $L$ of $\rho = \frac{1 + \beta \sqrt{w}}{2}$ is
$$\left ( \frac{1 + \beta \sqrt{w}}{2} - \frac{1 - \beta \sqrt{w}}{2} \right )^2 = \beta^2 w$$
which generates an ideal that is prime to $2 O_L$.  Thus the index of the subgroup $O_L + O_L \rho$ in $O_{L'}$ is odd.
This implies that the $O_L$-submodule generated by $1$, $\rho$ and $\mathcal{A}^{-1} \beta \sqrt{w}$
is $O_{L'}$ as claimed. 
\end{proof}

The following corollary gives a sufficiently large set of $w$ to construct every quadratic extension of $L$ with the same root discriminant as $L$.

\begin{corollary}\label{cor:main}
  For any number field $L$, let $w \in 1 + 4 O_L$ be such that $w \cdot O_L = \calA^2$ is the square of an integral ideal $\calA$.
  Then, $L' = L(\sqrt{w})$ has the same root discriminant as $L$, and its ring of integers $O_{L'}$ is generated (as an $O_L$-module)
  by $1$, $\frac{1+\sqrt{w}}{2}$ and $\calA^{-1}\sqrt{w}$.
  Moreover, any quadratic extension of $L$ with the same root discriminant can be expressed in this way.
\end{corollary}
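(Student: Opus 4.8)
The plan is to derive this corollary directly from Theorem \ref{thm:quadratic} by taking $\beta = 1$ throughout. First observe that the hypothesis $w \in 1 + 4O_L$ with $w O_L = \calA^2$ is precisely condition (iii) of Theorem \ref{thm:quadratic} with the choice $\beta = 1$: indeed $\beta^2 w = w \in 1 + 4O_L$, and $\beta^2 w O_L = w O_L = \calA^2$ is the square of the integral ideal $\calA$. Hence by the equivalence of (i), (ii), (iii) in Theorem \ref{thm:quadratic}, the extension $L' = L(\sqrt{w})$ is unramified over all primes of $O_L$, which by the equivalence with (ii) means $L'$ and $L$ have the same root discriminant. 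Moreover the last sentence of Theorem \ref{thm:quadratic} tells us that $O_{L'}$ is generated as an $O_L$-module by $1$, $\frac{1 + \beta\sqrt{w}}{2} = \frac{1 + \sqrt{w}}{2}$, and $\calA^{-1}\beta\sqrt{w} = \calA^{-1}\sqrt{w}$, which is exactly the asserted description of the ring of integers.

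It remains to prove the ``moreover'' clause: that every quadratic extension $L''/L$ with the same root discriminant as $L$ arises as $L(\sqrt{w})$ for some $w \in 1 + 4O_L$ generating the square of an integral ideal. Write $L'' = L(\sqrt{u})$ for some $u \in L^*$ (every quadratic extension has this form, and we may take $u \in O_L$ after clearing denominators, though this is not even needed). Since $L''/L$ has the same root discriminant as $L$, condition (ii) holds for $u$, hence condition (iii) holds: there exists $\beta \in L^*$ with $\beta^2 u \in 1 + 4O_L$ and $\beta^2 u O_L = \calA^2$ for some ideal $\calA$ of $O_L$. Now set $w = \beta^2 u$. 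Then $w \in 1 + 4O_L$, and $w O_L = \calA^2$; moreover, since $1 + 4O_L \subset O_L$, we have $w \in O_L$ and $\calA$ is an integral ideal. Finally $L(\sqrt{w}) = L(\sqrt{\beta^2 u}) = L(\sqrt{u}) = L''$ because $\beta \in L^*$ so $\sqrt{\beta^2 u} = \beta\sqrt{u}$ differs from $\sqrt{u}$ by an element of $L^*$. This exhibits $L''$ in the required form and completes the proof.

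I do not anticipate a genuine obstacle here, since all the analytic content is already packaged in Theorem \ref{thm:quadratic}; the corollary is essentially the specialization $\beta = 1$ together with the observation that the general condition (iii) can always be reduced to this case by absorbing $\beta$ into the radicand. The only point requiring a word of care is confirming that the ideal $\calA$ produced is \emph{integral} rather than merely fractional: this follows because $w = \beta^2 u$ lies in $1 + 4O_L \subseteq O_L$, so $wO_L$ is an integral ideal, and an integral ideal that is a square is the square of an integral ideal (as $\calA$ and $\calA^{-1}$ have disjoint sets of primes with negative exponent unless $\calA$ is integral). One should state this explicitly to keep the argument clean.
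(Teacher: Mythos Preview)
Your proposal is correct and is exactly the intended derivation: the paper states this corollary without proof, treating it as immediate from Theorem~\ref{thm:quadratic}, and your argument---specialize to $\beta=1$ for the forward direction, and for the ``moreover'' clause absorb the $\beta$ supplied by (iii) into the radicand via $w=\beta^2 u$---is precisely how one unpacks that implication. Your remark on the integrality of $\calA$ is a reasonable point of hygiene, though it follows at once from $w\in 1+4O_L\subset O_L$ and unique factorization of fractional ideals.
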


The next corollary describes all elementary abelian two-extensions of $L$ that have the same root discriminant as $L$.
Recall that the narrow ideal class group $\mathrm{Cl}(L)^+$ of $L$ is the quotient of the group of all fractional ideals of $L$ by the subgroup of principal ideals that have a generator that is positive under all the embeddings of $L$ into $\mathbb{R}$.

\begin{corollary}
\label{cor:multiquadconstruct}
For $i = 1,\ldots,m$ let $w_i \in L^*$ satisfy the conditions of Theorem \ref{thm:quadratic}.  Suppose no non-trivial product of a finite subset of $\{w_1,\ldots,w_m\}$  lies in $(L^*)^2$.  
\begin{enumerate}
\item[i.] The field  
$N = L(\sqrt{w_1},\ldots,\sqrt{w_m})$ is an elementary abelian two-extension of $L$ of degree $2^m$ that is unramified over all finite primes, so that its root discriminant is the same as that of $L$.  
\item[ii.] The ring of integers $O_N$ is generated by the rings of integers $O_{L(\sqrt{w_i})}$ as $i = 1,\ldots,m$, and $O_N$ is isomorphic to the tensor product over $O_L$ of these rings. 
\item[iii.] The unique maximal $N$ of this kind is associated to any choice of $W = \{w_1,\ldots,w_m\}$ with these properties for which $m$ is the two-rank of the two-torsion $\mathrm{Cl}(L)^+[2]$ in the narrow ideal class group $\mathrm{Cl}(L)^+$ of $L$.
\end{enumerate}
\end{corollary}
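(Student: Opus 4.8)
The plan is to prove the three assertions in turn. Throughout write $N=L(\sqrt{w_1},\dots,\sqrt{w_m})$ for the field in part (i), and let $V\subseteq L^{*}/(L^{*})^{2}$ denote the subgroup of classes $[w]$ for which $L(\sqrt w)/L$ is unramified at every finite prime of $O_L$; by Theorem \ref{thm:quadratic} this $V$ is exactly the set of classes having a representative satisfying condition (iii) there. For part (i), each $L(\sqrt{w_i})/L$ is unramified at all finite primes by hypothesis (condition (i) of Theorem \ref{thm:quadratic}), so $N/L$ is unramified at all finite primes because a compositum of finitely many extensions unramified at a prime $\mathcal P$ is again unramified at $\mathcal P$ (the inertia subgroup of $\Gal(N/L)$ at a prime over $\mathcal P$ embeds via restriction into the product of the inertia subgroups of the factors). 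Being a compositum of quadratic extensions, $N/L$ is Galois with group of exponent $2$, hence elementary abelian, and Kummer theory gives $[N:L]=\lvert\langle[w_1],\dots,[w_m]\rangle\rvert$ in $L^{*}/(L^{*})^{2}$, which is $2^{m}$ precisely because no non-trivial product of a subset of the $w_i$ is a square, i.e.\ the classes $[w_i]$ are $\mathbb{F}_2$-independent. Since the relative different $\mathfrak d_{N/L}$ is then trivial, multiplicativity of differents gives $\Delta(N)=\Delta(L)^{[N:L]}$ and hence $\delta(N)=\delta(L)$, exactly as in the equivalence of (i) and (ii) of Theorem \ref{thm:quadratic}.

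For part (ii), put $R=O_{L(\sqrt{w_1})}\otimes_{O_L}\cdots\otimes_{O_L}O_{L(\sqrt{w_m})}$ and let $\phi\colon R\to O_N$ be the $O_L$-algebra map induced by multiplication inside $N$. Since the $[w_i]$ are independent, $R\otimes_{O_L}L\cong L(\sqrt{w_1})\otimes_L\cdots\otimes_L L(\sqrt{w_m})$ is the \emph{field} $N$, so $\phi$ becomes an isomorphism after $\otimes_{O_L}L$; as $R$ is a projective (hence torsion-free) $O_L$-module of rank $2^{m}$ with image consisting of algebraic integers, $\phi$ is injective onto a full $O_L$-lattice in $O_N$. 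Equality may be checked after localizing at each maximal ideal $\mathcal P$ of $O_L$: there $(O_L)_{\mathcal P}$ is a discrete valuation ring, each $(O_{L(\sqrt{w_i})})_{\mathcal P}$ is the integral closure of $(O_L)_{\mathcal P}$ in $L(\sqrt{w_i})$ and is a finite \'etale $(O_L)_{\mathcal P}$-algebra because $L(\sqrt{w_i})/L$ is unramified at $\mathcal P$, so the tensor product $R_{\mathcal P}$ is \'etale over $(O_L)_{\mathcal P}$, hence a finite product of unramified extensions and in particular integrally closed in $R_{\mathcal P}\otimes_{(O_L)_{\mathcal P}}L=N$; thus $R_{\mathcal P}=(O_N)_{\mathcal P}$ for all $\mathcal P$, and $R=O_N$. (Alternatively, compare relative discriminants: all of $\mathrm{disc}(O_{L(\sqrt{w_i})}/O_L)$ and $\mathrm{disc}(O_N/O_L)$ equal $O_L$ by the unramifiedness established in part (i), and $\mathrm{disc}(R/O_L)=[O_N:R]^{2}\,\mathrm{disc}(O_N/O_L)$ as fractional ideals of $O_L$ forces $[O_N:R]=O_L$.)

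For part (iii), I would invoke class field theory: the maximal abelian extension of $L$ unramified at all finite primes is the narrow Hilbert class field $H^{+}$, with $\Gal(H^{+}/L)\cong\mathrm{Cl}(L)^{+}$, so the maximal elementary abelian two-extension of $L$ unramified at all finite primes is the subfield $N_{\max}\subseteq H^{+}$ with $\Gal(N_{\max}/L)\cong\mathrm{Cl}(L)^{+}/(\mathrm{Cl}(L)^{+})^{2}\cong\mathrm{Cl}(L)^{+}[2]$; hence $[N_{\max}:L]=2^{m}$ where $m=\dim_{\mathbb{F}_2}\mathrm{Cl}(L)^{+}[2]$ is the two-rank of $\mathrm{Cl}(L)^{+}$, and by Kummer theory $N_{\max}=L(\sqrt V)$ with $\dim_{\mathbb{F}_2}V=\dim_{\mathbb{F}_2}\mathrm{Hom}(\mathrm{Cl}(L)^{+},\mathbb{F}_2)=m$. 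By Theorem \ref{thm:quadratic}, a family $\{w_1,\dots,w_m\}$ satisfying the hypotheses of the corollary is precisely a choice of $\mathbb{F}_2$-independent classes $[w_1],\dots,[w_m]$ in $V$, and $L(\sqrt{w_1},\dots,\sqrt{w_m})=L(\sqrt{\langle[w_1],\dots,[w_m]\rangle})$; so the fields $N$ arising in part (i) are exactly the subfields $L(\sqrt U)$ with $U\subseteq V$ a subspace, ordered by inclusion of the $U$. The unique maximal one is obtained when $U=V$, equals $N_{\max}$, is independent of the chosen basis, and has degree $2^{m}$; a choice $W$ realizes it exactly when $\dim_{\mathbb{F}_2}\langle W\rangle=\dim_{\mathbb{F}_2}V=m$, i.e.\ when $m$ is the two-rank of $\mathrm{Cl}(L)^{+}[2]$, which is the assertion.

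The only genuinely non-formal ingredient is the identification used in part (iii) of the quadratic extensions of $L$ unramified at all finite primes with the $\mathbb{F}_2$-dual of the narrow class group — that is, the class field theory description of $H^{+}$ and the computation of its two-rank. Parts (i) and (ii) require only Kummer theory together with the behaviour of inertia, differents, and integral closure in composita, all of which already occur in the proof of Theorem \ref{thm:quadratic}, and so present no real difficulty.
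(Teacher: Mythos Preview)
Your proposal is correct and follows essentially the same approach as the paper: Kummer theory plus preservation of unramifiedness under composita for (i), the tensor product of rings of integers in unramified composita for (ii), and the narrow Hilbert class field from class field theory for (iii). You supply considerably more detail than the paper's terse proof---in particular your localization/\'etale argument (or the discriminant comparison) for (ii) spells out what the paper simply asserts---but the underlying ideas are the same.
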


\begin{proof} By Kummer theory, $N$ is Galois with group $(\mathbb{Z}/2)^m$ over $L$.  The compositum of two disjoint extensions of $L$ that are unramified over all finite primes is unramified over all finite primes, and the ring of integers of such a compositum is isomorphic to the tensor product over $O_L$.  The rings of integers of the extensions.  By class field theory, the unique maximal elementary abelian two-extension of $L$ that is unramified over all finite primes has Galois group over $L$ isomorphic to the quotient $\mathrm{Cl}(L)^+/2\mathrm{Cl}(L)^+$.  This quotient has the same order as that of $\mathrm{Cl}(L)^+[2]$.
\end{proof}

The following results give a description by Kummer theory  of the maximal $N$ of Corollary \ref{cor:multiquadconstruct}.  
We have the following consequence of the strong approximation theorem.

\begin{lemma}
\label{lem:defineit} Let $L$ be an arbitrary number field.  Define $Z(L)$ to be the multiplicative subgroup of elements $\alpha \in L^*$ such that $\alpha O_L$ is the square of a fractional ideal $I_\alpha$.  Let $\mathrm{Cl}(L)[2]$ be the two-torsion of the ideal class group of $L$. There are homomorphisms
\begin{equation}
\label{eq:xipi}
\xi: Z(L) \to \mathrm{Cl}(L)[2] \quad \mathrm{and} \quad  
\pi:Z(L) \to (O_L/4O_L)^*/((O_L/4O_L)^*)^2
\end{equation} defined in
the following way.  For $\alpha \in Z(L)$ let $\xi(\alpha)$ be the ideal class of $I_\alpha$. There is an element $\beta \in L^*$ such that 
$\alpha \beta^2$ lies in $O_L$ and is invertible mod $2O_L$. Let $\pi(\alpha)$ be the image of $\alpha \beta^{2}$ in $(O_L/4O_L)^*/((O_L/4O_L)^*)^2$. 
\end{lemma}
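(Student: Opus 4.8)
The plan is to treat $\xi$ and $\pi$ separately, in each case first checking that the recipe is well defined and then that it respects multiplication; the only ingredient beyond formal manipulation will be a standard approximation statement.

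For $\xi$, the key first observation is that $I_\alpha$ is \emph{uniquely} determined by $\alpha$: in the Dedekind domain $O_L$ the equation $I_\alpha^2 = \alpha O_L$ pins down the $\mathfrak p$-adic valuation of $I_\alpha$ at every prime $\mathfrak p$, hence $I_\alpha$ itself by unique factorization of fractional ideals. Since $I_\alpha^2 = \alpha O_L$ is principal, the class $[I_\alpha]$ is $2$-torsion, so $\xi$ does take values in $\mathrm{Cl}(L)[2]$; and for $\alpha,\alpha'\in Z(L)$ the identity $(I_\alpha I_{\alpha'})^2 = \alpha\alpha' O_L$ together with this uniqueness gives $I_{\alpha\alpha'} = I_\alpha I_{\alpha'}$, so $\xi(\alpha\alpha') = \xi(\alpha)\xi(\alpha')$. (The same uniqueness also shows $Z(L)$ is a subgroup of $L^*$, with $I_{\alpha^{-1}} = I_\alpha^{-1}$.)

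For $\pi$, I first produce a $\beta$ as in the statement. Given $\alpha\in Z(L)$, the ideal class $[I_\alpha]$ contains an integral ideal $\mathfrak c$ coprime to $2O_L$ — this is the standard fact, a form of strong approximation, that every ideal class has an integral representative coprime to a prescribed modulus. Writing $\mathfrak c = \beta\,I_\alpha$ for some $\beta\in L^*$ (legitimate since $\mathfrak c$ and $I_\alpha$ lie in the same class), one gets $\alpha\beta^2 O_L = I_\alpha^2(\beta O_L)^2 = \mathfrak c^2 \subseteq O_L$, so $\alpha\beta^2\in O_L$ is invertible modulo $2O_L$, and $\pi(\alpha)$ is defined as the class of $\alpha\beta^2$ in $(O_L/4O_L)^*/((O_L/4O_L)^*)^2$.

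It then remains to check independence of the choice of $\beta$ and multiplicativity. If $\beta_1,\beta_2$ both have the required property, set $u=\beta_1/\beta_2\in L^*$; then $u^2 = (\alpha\beta_1^2)/(\alpha\beta_2^2)$ is a quotient of two elements of $O_L$ that are units at every prime $\mathfrak p\mid 2$, so $v_\mathfrak p(u)=0$ for all such $\mathfrak p$, whence $u$ lies in the semilocalization of $O_L$ at the primes over $2$ and has a well-defined image in $(O_L/4O_L)^*$. Since $\alpha\beta_1^2 = u^2(\alpha\beta_2^2)$, the classes of $\alpha\beta_1^2$ and $\alpha\beta_2^2$ in $(O_L/4O_L)^*/((O_L/4O_L)^*)^2$ agree, so $\pi$ is well defined. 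Finally, if $\beta_1$ works for $\alpha_1$ and $\beta_2$ for $\alpha_2$, then $\beta_1\beta_2$ works for $\alpha_1\alpha_2$, since $\alpha_1\alpha_2(\beta_1\beta_2)^2 = (\alpha_1\beta_1^2)(\alpha_2\beta_2^2)$ is a product of elements of $O_L$ coprime to $2O_L$; computing $\pi(\alpha_1\alpha_2)$ with this choice gives $\pi(\alpha_1)\pi(\alpha_2)$. I expect the existence of $\beta$ — i.e. the approximation statement on integral representatives of ideal classes coprime to $2O_L$ — to be the only non-routine point, everything else being bookkeeping with ideal factorizations and $\mathfrak p$-adic valuations.
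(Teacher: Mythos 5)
Your proof is correct and follows essentially the same route as the paper's: construct $\beta$ by choosing an integral representative of $[I_\alpha]$ coprime to $2O_L$ (strong approximation), and check independence of the choice by noting that the ratio of two valid choices of $\alpha\beta^2$ is the square of an element of valuation zero at every prime over $2$, hence a square in $(O_L/4O_L)^*$. You spell out a few routine verifications that the paper leaves implicit (uniqueness of $I_\alpha$, that $\xi$ lands in $\mathrm{Cl}(L)[2]$, that $Z(L)$ is a group, multiplicativity of $\pi$), but the core argument is the same.
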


\begin{proof}
The homorphism $\xi$ is well defined since when $\alpha, \alpha' \in Z(L)$ we have $I_{\alpha \cdot \alpha'} = I_\alpha \cdot I_{\alpha'}$.  To show $\pi$ is well defined, suppose $\alpha \in Z(L)$.  Then $\alpha O_L = I_\alpha^2$.  We can find an element $\gamma \in L^*$ such that $I_\alpha \cdot \gamma$ is an ideal of $O_L$ that is prime to $2O_L$.  Then $\alpha \gamma^2 O_L$ is a fractional ideal that is prime to $2O_L$.  We can find an element $0 \ne \tau \in O_L$ such that $\tau O_L$ is prime to $2 O_L$ and $\alpha \gamma^2 \tau^2 \in O_L$.  Now setting $\beta = \gamma \tau$ shows
$\alpha \beta^{2} \in O_L$ and $\alpha \beta^{2}$ is prime to $2O_L$.  To complete the proof that
$\pi$ is well defined, we suppose that $\beta' \in L^*$ is another element such that $\alpha (\beta')^{2} \in O_L$ is prime to $2 O_L$.  We need to show that 
$\alpha \beta^{2}$ and $\alpha (\beta')^{2}$ have the same image in $(O_L/4O_L)^*/((O_L/4O_L)^*)^2$.
This follows from the fact that 
$$\frac{\alpha \beta^{2}}{\alpha (\beta')^{2}} = \left ( \frac{\beta}{\beta'} \right )^2$$
generates a fractional ideal  that is prime to $2O_L$, so $\beta/\beta'$ also generates such an ideal and defines a class in $(O_L/4O_L)^*$.
\end{proof}

\begin{theorem}
\label{thm:biggest}
There is a finitely generated subgroup $V$  of $Z(L)$ that contains the unit group $O_L^*$ and for which $\xi(V) = \mathrm{Cl}(L)[2]$ when $\xi$ is as in (\ref{eq:xipi}).  For any such $V$ let $V' = V \cap \mathrm{Kernel}(\pi)$.   Let $W = \{w_1,\ldots,w_m\}$ be a maximal finite set of element of $V'$ whose images in $L^*/(L^*)^2$ are independent, i.e. such that no non-empty product of the elements of a subset of $W$ lies in $L^*$.  
Then $N = L(\sqrt{w_1},\ldots,\sqrt{w_m})$ is the unique maximal elementary abelian two-extension of $L$ that has the same root discriminant as $L$.  In consequence,
$2^m = \# \mathrm{Cl}(L)^+[2]$.

\end{theorem}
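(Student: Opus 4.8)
The plan is to re-express the construction in terms of the homomorphisms $\xi$ and $\pi$ of Lemma~\ref{lem:defineit} and then appeal to Theorem~\ref{thm:quadratic} and Corollary~\ref{cor:multiquadconstruct}. Write $\Pi=\ker(\pi)\subseteq Z(L)$ and let $\overline{\Pi}$ be its image in $L^*/(L^*)^2$. First I would settle the opening assertion: every unit generates the ideal $O_L=O_L^2$, so $O_L^*\subseteq Z(L)$, and $\xi$ is surjective, because a class in $\mathrm{Cl}(L)[2]$ represented by an ideal $I$ satisfies $I^2=\alpha O_L$ for some $\alpha\in Z(L)$ with $\xi(\alpha)=[I]$. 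Since $\mathrm{Cl}(L)[2]$ is finite and $O_L^*$ is finitely generated (Dirichlet's unit theorem), one may take $V=\langle O_L^*,\alpha_1,\dots,\alpha_r\rangle$ with the $\alpha_j$ mapping onto a generating set of $\mathrm{Cl}(L)[2]$; this $V$ is finitely generated, contains $O_L^*$, and has $\xi(V)=\mathrm{Cl}(L)[2]$.

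The first key step is to show that for $w\in L^*$ condition (iii) of Theorem~\ref{thm:quadratic} holds if and only if $w\in\Pi$. If (iii) holds, then $\beta^2w$ generates the square of an ideal, so $w\in Z(L)$; and $\beta^2w\in 1+4O_L$ is a unit mod $2O_L$ congruent to $1$ mod $4O_L$, so computing $\pi(w)$ with this $\beta$ gives the trivial class, i.e.\ $w\in\Pi$. Conversely, if $w\in\Pi$, choose $\beta_0$ with $w\beta_0^2\in O_L$ invertible mod $2O_L$; triviality of $\pi(w)$ means $w\beta_0^2\equiv c^2\pmod{4O_L}$ for some $c\in O_L$ invertible mod $2O_L$, and setting $\beta=\beta_0c'$ for $c'\in O_L$ with $cc'\equiv 1\pmod{4O_L}$ yields $w\beta^2\equiv(cc')^2\equiv 1\pmod{4O_L}$, while $w\beta^2O_L$ is the square of an ideal since $w\in Z(L)$; so (iii) holds. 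By Theorem~\ref{thm:quadratic}, this means precisely that for $w\in L^*$ the extension $L(\sqrt w)/L$ is unramified at all finite primes exactly when $w\in\Pi$.

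Next I would show the images of $V'=V\cap\Pi$ and of $\Pi$ in $L^*/(L^*)^2$ agree. One has $\ker(\xi)=O_L^*(L^*)^2$: if $\xi(\alpha)=1$ then $I_\alpha=\gamma O_L$ is principal and $\alpha O_L=\gamma^2O_L$ forces $\alpha\in O_L^*\gamma^2$, and the reverse inclusion is clear. Given $\alpha\in\Pi$, surjectivity of $\xi|_V$ provides $v\in V$ with $\xi(v)=\xi(\alpha)$, so $\alpha v^{-1}\in\ker(\xi)=O_L^*(L^*)^2\subseteq V(L^*)^2$ using $O_L^*\subseteq V$; writing $\alpha=v'x^2$ with $v'\in V$, $x\in L^*$, we get $\pi(v')=\pi(\alpha)=1$, hence $v'\in V\cap\Pi=V'$ and $\alpha\in V'(L^*)^2$. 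Thus $\overline{V'}=\overline{\Pi}$; and since $V$, hence $V'$, is finitely generated, $\overline{\Pi}$ is a finite $\mathbb{F}_2$-vector space. A maximal subset $W=\{w_1,\dots,w_m\}$ of $V'$ whose images in $L^*/(L^*)^2$ are independent must span $\overline{V'}=\overline{\Pi}$, so it is an $\mathbb{F}_2$-basis and $m=\dim_{\mathbb{F}_2}\overline{\Pi}$, while $N:=L(\sqrt{w_1},\dots,\sqrt{w_m})=L(\sqrt w:w\in\Pi)$.

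Finally I would conclude. Each $w_i\in\Pi$ satisfies the hypotheses of Theorem~\ref{thm:quadratic} (first step) and the $w_i$ are independent mod squares, so by Corollary~\ref{cor:multiquadconstruct}(i) the extension $N/L$ is elementary abelian of degree $2^m$ and unramified at all finite primes, equivalently has the same root discriminant as $L$. For maximality, if $N\subseteq N'$ with $N'/L$ elementary abelian $2$ and unramified at all finite primes, write $N'=L(\sqrt{w_1'},\dots,\sqrt{w_k'})$; each $L(\sqrt{w_i'})/L$ is a subextension of $N'$, hence unramified at all finite primes, so $w_i'\in\Pi$ and $\langle\overline{w_1'},\dots,\overline{w_k'}\rangle\subseteq\overline{\Pi}=\langle\overline{w_1},\dots,\overline{w_m}\rangle$, giving $N'\subseteq N$ and $N'=N$. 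Hence $N$ is the unique maximal elementary abelian two-extension of $L$ that is unramified at all finite primes, and since (by class field theory, or Corollary~\ref{cor:multiquadconstruct}(iii)) this maximal extension has degree $\#\mathrm{Cl}(L)^+[2]$ over $L$, we get $2^m=\#\mathrm{Cl}(L)^+[2]$. The only real obstacles are the two translation steps — identifying the $1+4O_L$ clause of Theorem~\ref{thm:quadratic}(iii) with $\ker(\pi)$, and showing the finitely generated $V'$ already exhausts $\Pi$ modulo squares — which are elementary but require care with the definitions of $\xi$ and $\pi$ and with fractional versus integral ideals; the intervention of the \emph{narrow} class group in the final count is then inherited from Corollary~\ref{cor:multiquadconstruct} (or from the narrow Hilbert class field).
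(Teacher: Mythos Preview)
Your proof is correct and follows essentially the same route as the paper's. Both arguments hinge on the identity $\ker(\xi)=O_L^*(L^*)^2$ together with $\xi(V)=\mathrm{Cl}(L)[2]$ and $O_L^*\subseteq V$ to show that every $w\in Z(L)$ (and hence every $w\in\Pi$) is congruent mod $(L^*)^2$ to an element of $V$; the paper carries this out by directly adjusting a given Kummer generator $w$ of an unramified quadratic extension into $V'$, while you package the same computation as the equality $\overline{V'}=\overline{\Pi}$ and add the explicit equivalence ``condition (iii) of Theorem~\ref{thm:quadratic} $\Leftrightarrow w\in\ker(\pi)$'', but the substance is the same.
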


\begin{proof}
There is a $V$ with the stated properties because $O_L^*$ and the two-torsion $\mathrm{Cl}(L)[2]$ of the class group of $L$ are finitely generated.  In view of Corollary \ref{cor:multiquadconstruct}, to complete the proof it will suffice to show that if $N'$ is a quadratic extension of $L$ that is unramified over all prime ideals of $O_L$, then $N' = L(\sqrt{w'})$ for some $w' \in V' = V \cap \mathrm{Kernel}(\pi)$.
We know $N' = L(\sqrt{w})$ for some $w$ having the properties in Theorem \ref{thm:quadratic}. Then $w O_L =  
I^2$ for some non-zero ideal $I$ of $O_L$.  The ideal class $[I]$ of $I$ has order $1$ or $2$.  Therefore there is an element $v \in V$ such that $\xi(v) = [I]$.  Hence there is a $\gamma \in L^*$ such that $(I\gamma )^2 = v O_L$ as ideals.  Thus $w \gamma^{2} O_L = v O_L$, so $w \gamma^{2} \in v\cdot O_L^* \subset V$ because $O_L^*$ is contained in the group $V$.  Replacing $w$ by $w \gamma^2$ does not change $N' = L(\sqrt{w})$, so after making this replacement we can assume $w \in V$.   Now since $L(\sqrt{w})/L$ is unramified over all primes of $O_L$,  Theorem \ref{thm:quadratic} shows $\pi(w)$ is trivial when $\pi$ is defined as in Lemma \ref{lem:defineit}.  Thus $w \in V \cap \mathrm{Kernel}(\pi) = V'$, which completes the proof.
\end{proof}

Note that there may be many $V$ and  $W$ for which the conclusions of Theorem \ref{thm:biggest} hold, but all such pairs determine the same maximal $N$.

Recall that if $\mathcal{S}$ is a finite set of prime ideals of $O_L$, the subgroup $U(\mathcal{S}) \subset L^*$ of $\mathcal{S}$-units of $L$ is the multiplicative group of all $\alpha \in L^*$
 such that the ideal $\alpha O_L$ is a product of integral powers of elements of $\mathcal{S}$.  The group $U(\mathcal{S})$ is finitely generated. 
 
 \begin{corollary}
 \label{cor:niceroots}  Let $\mathcal{S}$ be a finite set of prime ideals of $O_L$ such that the subgroup of $\mathrm{Cl}(L)$ generated by the prime ideals in $\mathcal{S}$ contains the two-torsion of $\mathrm{Cl}(L)$.  Then  in  Theorem \ref{thm:biggest} we can let $V$ be $U(\mathcal{S}) \cap Z(L)$.  
\end{corollary}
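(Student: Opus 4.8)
The plan is to check that the group $V := U(\mathcal{S}) \cap Z(L)$ has the three properties demanded of $V$ in Theorem \ref{thm:biggest}: it is finitely generated, it contains $O_L^*$, and $\xi(V) = \mathrm{Cl}(L)[2]$ for $\xi$ as in Lemma \ref{lem:defineit}.

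First I would dispose of finite generation: the $\mathcal{S}$-unit group $U(\mathcal{S})$ is finitely generated (as recalled just before the statement), and $V$ is a subgroup of it, hence finitely generated as well since $U(\mathcal{S})$ is abelian. For the containment $O_L^* \subseteq V$, note that if $u \in O_L^*$ then $u O_L = O_L$ is (vacuously) a product of integral powers of the primes in $\mathcal{S}$, so $u \in U(\mathcal{S})$, and $u O_L = O_L = (O_L)^2$ is the square of a fractional ideal, so $u \in Z(L)$; thus $u \in V$.

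The substantive point is the equality $\xi(V) = \mathrm{Cl}(L)[2]$. The inclusion $\xi(V) \subseteq \mathrm{Cl}(L)[2]$ is automatic, since $\xi$ is defined in Lemma \ref{lem:defineit} as a homomorphism $Z(L) \to \mathrm{Cl}(L)[2]$. For the reverse inclusion, take a class $c \in \mathrm{Cl}(L)[2]$. By the hypothesis on $\mathcal{S}$, the classes of the primes in $\mathcal{S}$ generate a subgroup of $\mathrm{Cl}(L)$ containing $c$, so there is a fractional ideal $J$ which is a product of integral powers of primes in $\mathcal{S}$ with $[J] = c$. Since $c^2 = 1$, the ideal $J^2$ is principal, say $J^2 = \alpha O_L$ with $\alpha \in L^*$. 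Then $\alpha O_L = J^2$ is still a product of integral powers of primes in $\mathcal{S}$, so $\alpha \in U(\mathcal{S})$, and $\alpha O_L$ is the square of the fractional ideal $J$, so $\alpha \in Z(L)$ with $I_\alpha = J$. Hence $\alpha \in V$ and $\xi(\alpha) = [I_\alpha] = [J] = c$, which gives $\mathrm{Cl}(L)[2] \subseteq \xi(V)$. With the three properties verified, Theorem \ref{thm:biggest} applies to $V = U(\mathcal{S}) \cap Z(L)$, proving the corollary.

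No step here presents a genuine obstacle; the argument is short. The only place the hypothesis on $\mathcal{S}$ is used — and hence the heart of the matter — is the surjectivity $\xi(V) = \mathrm{Cl}(L)[2]$: representability of every $2$-torsion class by an ideal supported on $\mathcal{S}$ is precisely what lets the element $\alpha$ with $J^2 = \alpha O_L$ be chosen inside $U(\mathcal{S})$.
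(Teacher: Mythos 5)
Your proposal is correct and takes the natural approach that the paper leaves implicit (the corollary is stated without proof). You verify exactly the three hypotheses that Theorem \ref{thm:biggest} places on $V$ — finite generation, containment of $O_L^*$, and surjectivity of $\xi$ onto $\mathrm{Cl}(L)[2]$ — with the hypothesis on $\mathcal{S}$ doing its work precisely in the last step, just as one would expect.
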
  

\begin{remark}
\label{rem:mink}
Minkowski showed that every ideal class in $\mathrm{Cl}(L)$ is represented by an integral ideal of norm bounded by 
$$M_L = \left ( \frac{4}{\pi} \right )^{r_2(L)} \frac{n(L)!}{n(L)^{n(L)}} \sqrt{\Delta(L)}.$$
Thus in Corollary \ref{cor:niceroots} we can let $\mathcal{S}$ be the set of prime ideals of $O_L$ having norm bounded by $M_L$.  
\end{remark}

The following special case of Theorem \ref{thm:biggest} includes the fields constructed in \cite{BC} and has the virtue of leading to an explicit basis for $O_N$ as a free $O_L$-module.

 \begin{theorem}
\label{ex:unitex}  Suppose $W = \{w_1,\ldots,w_m\}$ is any subset of $O_L^*$ such that each $w_i$ defines a class in $(O_L/4O_L)^*$ that is a square and no non-nontrivial product of the $w_i$ lies in $(O_L^*)^2$.  Then the field $N = L(\sqrt{w_1},\ldots,\sqrt{w_m})$  has $\delta(N) = \delta(L)$. If each of the $w_i$ has trivial image in $(O_L/4O_L)^*$ then the ring of integers $O_N$ of $N$ is the free $O_L$-module on the elements
\begin{equation}
\label{eq:etaTdef}
\eta_T = \prod_{i \in T} \frac{1 + \sqrt{w_i}}{2}
\end{equation}
where $T$ ranges over all subsets of $\{1,\ldots,m\}$.
\end{theorem}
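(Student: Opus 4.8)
The plan is to obtain the first assertion directly from Corollary~\ref{cor:multiquadconstruct}, and then to read off the ring of integers by combining the explicit integral generators furnished by Corollary~\ref{cor:main} (in the simplified form available when $w_i\equiv 1\pmod{4O_L}$) with the tensor-product description in Corollary~\ref{cor:multiquadconstruct}(ii).

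First I would check that each $w_i$ satisfies the hypotheses of Theorem~\ref{thm:quadratic}. Since $w_i\in O_L^*$, the ideal $w_iO_L=O_L$ is trivially the square of an ideal. The hypothesis that the class of $w_i$ in $(O_L/4O_L)^*$ is a square means --- as the inverse of a square is a square --- that there is $\beta_i\in O_L$, necessarily coprime to $2O_L$, whose image in $(O_L/4O_L)^*$ squares to that of $w_i^{-1}$; then $\beta_i^2w_i\in 1+4O_L$ and $\beta_i^2w_iO_L=(\beta_iO_L)^2$. So condition (iii) of Theorem~\ref{thm:quadratic} holds with $\mathcal{A}=\beta_iO_L$, and $L(\sqrt{w_i})/L$ is unramified at all finite primes. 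Next I would note that a product of units lies in $(L^*)^2$ precisely when it lies in $(O_L^*)^2$ (a square root of a unit is again a unit), so the independence hypothesis on $W$ is exactly the hypothesis of Corollary~\ref{cor:multiquadconstruct}. Part~(i) of that corollary then gives that $N=L(\sqrt{w_1},\ldots,\sqrt{w_m})$ is elementary abelian of degree $2^m$ over $L$ and unramified at all finite primes, whence $\delta(N)=\delta(L)$; this is the first assertion.

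For the statement about $O_N$ I would add the hypothesis that each $w_i$ has trivial image in $(O_L/4O_L)^*$, i.e.\ $w_i\in 1+4O_L$, so that in the above one may take $\beta_i=1$ and $\mathcal{A}=O_L$. Corollary~\ref{cor:main} then says that $O_{L(\sqrt{w_i})}$ is generated over $O_L$ by $1$, $\eta_{\{i\}}=\frac{1+\sqrt{w_i}}{2}$, and $\sqrt{w_i}$; since $\sqrt{w_i}=2\eta_{\{i\}}-1$ the last generator is redundant, so $O_{L(\sqrt{w_i})}=O_L\oplus O_L\,\eta_{\{i\}}$ is free of rank $2$ over $O_L$. (One can also see this directly: $\eta_{\{i\}}$ is a root of $z^2-z+\tfrac14(1-w_i)\in O_L[z]$, and the discriminant of the $O_L$-basis $\{1,\eta_{\{i\}}\}$ is $(2\eta_{\{i\}}-1)^2=w_i\in O_L^*$, so the order $O_L[\eta_{\{i\}}]$ is already maximal because $L(\sqrt{w_i})/L$ is unramified.)

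Finally I would invoke Corollary~\ref{cor:multiquadconstruct}(ii): multiplication identifies $O_N$ with the tensor product over $O_L$ of the rings $O_{L(\sqrt{w_i})}$, that is, with $\bigotimes_{i=1}^{m}\bigl(O_L\oplus O_L\,\eta_{\{i\}}\bigr)$. A tensor product of finitely many finite free modules is free, with basis the elementary tensors of basis vectors; these are indexed by subsets $T\subseteq\{1,\ldots,m\}$ (put $\eta_{\{i\}}$ in the $i$th slot when $i\in T$ and $1$ otherwise), and under the multiplication map such a basis tensor maps to $\prod_{i\in T}\eta_{\{i\}}=\eta_T$. Hence $\{\eta_T\}_{T\subseteq\{1,\ldots,m\}}$ is an $O_L$-basis of $O_N$, which is the assertion. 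The whole argument is bookkeeping once Corollaries~\ref{cor:main} and~\ref{cor:multiquadconstruct} are available; the only points that need a moment's care are the translation of the ``square in $(O_L/4O_L)^*$'' hypothesis into condition~(iii) of Theorem~\ref{thm:quadratic} and the matching of the two forms of the independence hypothesis, and I do not expect a real obstacle.
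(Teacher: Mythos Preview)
Your proposal is correct and follows essentially the same route as the paper: verify condition~(iii) of Theorem~\ref{thm:quadratic} for each $w_i$, invoke Corollary~\ref{cor:multiquadconstruct} to get $\delta(N)=\delta(L)$ and the tensor-product description of $O_N$, then in the case $w_i\in 1+4O_L$ take $\beta=1$, $\mathcal{A}=O_L$ to see $O_{L(\sqrt{w_i})}=O_L\oplus O_L\,\eta_{\{i\}}$ and read off the basis $\{\eta_T\}$ from the tensor product. Your write-up is in fact slightly more careful than the paper's in spelling out why the ``square in $(O_L/4O_L)^*$'' hypothesis yields condition~(iii) and why independence modulo $(O_L^*)^2$ coincides with independence modulo $(L^*)^2$.
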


\begin{proof}
Corollary \ref{cor:multiquadconstruct}
 shows $\delta(N) = \delta(L)$ and that  $O_N$ is the tensor product over $O_L$ of the
rings of integers of the fields $L(\sqrt{w_i})$. Let $w = w_i \in O_L^*$ in Theorem \ref{thm:quadratic}.  We can take $\beta = 1$ in  Theorem \ref{thm:quadratic},which leads to $\mathcal{A} = O_L$ in this theorem because $w \in O_L^*$.  Now Theorem \ref{thm:quadratic} shows $O_{L(\sqrt{w_i})}$ is generated over $O_L$ by $1$, $\rho = \frac{1 + \sqrt{w_i}}{2}$ and $\mathcal{A}^{-1} \beta \sqrt{w_i} = O_L \sqrt{w_i}$.  However, $\sqrt{w_i} \in O_L + O_L \rho$, so we conclude $O_{L(\sqrt{w_i})}$ is the free rank two $O_L$-module generated by $1$ and
$\rho$.  Theorem \ref{ex:unitex} is now clear from the fact that $O_N$ is the tensor product of the $O_{L(\sqrt{w_i})}$ over $O_L$.
\end{proof}

We finish this section with a discussion of the maximal size of $m$  in Corollary \ref{cor:multiquadconstruct}.  By this corollary, the maximal value of $m$ is the
 the two-rank of the narrow ideal class group $\mathrm{Cl}^+(L)$ of $L$.  Here $\mathrm{Cl}^+(L)$ is an extension of the class group $\mathrm{Cl}(L)$ by an elementary abelian $2$-group of rank bounded by the number $r_1(L)$ of real places of $L$. So 
\begin{equation}
\label{eq:mclass}
m \le \mathrm{log}_2(\# \mathrm{Cl}^+(L)) \le \mathrm{log}_2(h_L) + r_1(L) 
\le \mathrm{log}_2(h_L) + n(L)
\end{equation}
when $h_L = \# \mathrm{Cl}(L)$ is the class number of $L$. 
The Brauer-Siegel Theorem and Zimmert's lower bounds on regulators give effective upper bounds on $h_L$ in terms of the degree of $L$ and the absolute value $\Delta(L)$ of the discriminant of $L$. For example, it follows from \cite[ eq. (2)]{Queme} that for each $\epsilon > 0$, all but finitely many $L$ of a given degree $n = n(L)$ satisfy
\begin{equation}
\label{eq:upperh}
\mathrm{log}(h_L) \le (1/2 + \epsilon) \mathrm{log}(\Delta(L)).
\end{equation}
Combining (\ref{eq:mclass}) and (\ref{eq:upperh}) this gives the bound 
\begin{equation}
\label{eq:uppermtwo}
m \le (1/2 + \epsilon) \mathrm{log}_2(\Delta(L)) + n(L)
\end{equation}
for all but finitely many $L$ of a given degree.

If one uses the construction in Theorem \ref{ex:unitex}, an easy upper bound on $m$ is the two-rank of $O_L^*/(O_L^*)^2$.  By the Dirichlet unit theorem, $O^*_L/(O_L^*)^2$ has two-rank bounded by $n(L)$, so $m \le n(L)$.    A case of interest  is when 
\begin{equation}
\label{eq:niceL}
L = \mathbb{Q}(\sqrt{p_1},\ldots,\sqrt{p_\ell})
\end{equation} for a set of $\ell$ distinct primes $p_1,\ldots,p_\ell$ that are congruent to $1$ mod $4$.  Then we have $m \le n(L) = 2^\ell$, so in principle $n(N)$ could be as large as
$2^{\ell + 2^\ell}$.  Since $\delta(N) = \delta(L)$, if $m = 2^\ell$ then 
this would produce an $N$ with the same root discriminant of $L$ but having degree $n(N) = n(L) \cdot 2^{n(L)}$.

However, it seems beyond present techniques to produce an infinite family of examples for which it can be proved that $m$ is on the order of  $n(L)$.  It is shown in \cite{BC} that one can give an effective construction, in the sense recalled below, of an infinite sequence of fields $L$ of the form in  (\ref{eq:niceL})
with $\ell \to \infty$ for which $m \ge c \ell^2$ for some universal positive constant $c$.  These fields are constructed by the method in  Theorem \ref{ex:unitex}.  The resulting $N$ thus
have $n(N) \ge 2^{\ell + c \ell^2}$ and the same root discriminant as $L$.  This was shown in 
\cite{BC} to imply that for these families and every $\epsilon > 0$, one has $\delta(N) < n(N)^\epsilon$ for all but finitely many $N $ in the family.  Note here, though, that the inequalities 
$$c \ell^2 \le m \le 2^\ell$$ 
show there is a large gap between the unconditional lower bound and the naive upper bound resulting from the construction in Theorem \ref{ex:unitex} for the families considered in \cite{BC}.  In fact, the $N/L$ constructed in \cite{BC} have the additional properties that they are Galois over $\mathbb{Q}$ and $\mathrm{Gal}(N/L)$ is central in $\mathrm{Gal}(N/\mathbb{Q})$. One can show that with these additional constraints, $m \le  \ell (\ell+ 1)/2$. The argument for this uses the growth of the successive quotients in the lower central $2$-series of a free pro-two group on $\ell$ generators.

\subsection{An infinite family of small discriminant fields} 
\label{s:inffamily}
We now summarize some consequences of the main result of \cite{BC}.  Let $\{p_1,\ldots,p_\ell\}$ be a set of distinct primes that are congruent to $1$ mod $4$.  Define $L = \mathbb{Q}(\sqrt{p_1},\ldots,\sqrt{p_\ell})$.  Let $\mathcal{S}_0$ be a finite set of elements of $O_L^* \cap (1 + 4 O_L)$ such that the images of the elements of $\mathcal{S}_0$ in the vector space $O_L^*/(O_L^*)^2$ over $\mathbb{Z}/2$ are linearly independent over $\mathbb{Z}/2$. 
\begin{enumerate}
\item[i.] Define $N$ to be the extension of $L$ generated by $\{\sqrt{w}: w \in \mathcal{S}_0\}$.  Then $N$ has degree $n(N) = 2^{\# \mathcal{S}_0 + \ell}$.  By Theorem \ref{ex:unitex}, 
$$\delta(N) = \delta(L) = (\prod_{i = 1}^\ell p_i)^{1/2}.$$
\item[ii.]  One can find an infinite family of fields $N$ of increasing degree $n(N)$ constructed as in (i) which the following is true. 
One has $\# \mathcal{S}_0 \ge c_0 \ell^2$ for some constant $c_0 > 0$ independent of $N$.
For each each $\epsilon > 0$, one has $\delta(N) < n(N)^\epsilon$ and $n(L) < n(N)^\epsilon$ for all but finitely many $N$ in the family.  
\medbreak
\item[iii.]  There is a polynomial $F(y) \in \mathbb{Q}[y]$ with the following property.  There is an infinite family of fields $N$ as in (ii) such that one can write down in time bounded by $F(\mathrm{log}(n(N)))$ the minimal polynomials over $\mathbb{Q}$ of the generators $\{\sqrt{p_i}: 1 \le i \le \ell\} \cup \{\sqrt{w}:w \in \mathcal{S}_0\}$ of $N$ over $\mathbb{Q}$.
 \end{enumerate}
 \medbreak
 
For $D \subset \{1,\ldots,\ell\}$ let 
\begin{equation}
\lambda_D = \prod_{i \in D} \frac{1 + \sqrt{p_i}}{2}
\end{equation}
Since $\mathbb{Q}(\sqrt{p_i})$ has ring of integers $\mathbb{Z} \oplus \mathbb{Z} \frac{1 + \sqrt{p_i}}{2}$ and the discriminants of the $\mathbb{Q}(\sqrt{p_i})$ as $p_i$ varies are disjoint, we have the following consequence of Theorem  \ref{ex:unitex}.
\begin{theorem}
\label{thm:nicebasis}  With the above notations
\begin{enumerate}
\item[i.] The set $\{\lambda_D: D \subset \{1,\ldots,\ell\} \}$ is a basis for $O_L$ over $\mathbb{Z}$. 
\item[ii.] For each subset $T$ of $\mathcal{S}_0$ let 
$$\eta_T = \prod_{w \in T} \frac{1+ \sqrt{w}}{2}$$
be as in Theorem \ref{ex:unitex}.  
The set $\{\eta_T :T \subset \mathcal{S}_0\}$ is a basis for $O_N$ as a free module for $O_L$.   
\item[iii.]The  set of products $\{\eta_T \cdot \lambda_D : T \subset \mathcal{S}_0, D \subset 
\{1,\ldots,\ell\} \}$ is a basis for $O_N$ over $\mathbb{Z}$.
\end{enumerate}
\end{theorem}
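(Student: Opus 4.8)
The plan is to derive all three parts from Theorem \ref{ex:unitex}, applied with base field $L$ and $W = \mathcal{S}_0$, together with the classical fact that the ring of integers of a compositum of number fields with pairwise coprime discriminants is the tensor product over $\mathbb{Z}$ of the individual rings of integers.

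For part (i), I would note that $L = \mathbb{Q}(\sqrt{p_1},\ldots,\sqrt{p_\ell})$ is the compositum of the quadratic fields $\mathbb{Q}(\sqrt{p_i})$, each of which has discriminant $p_i$ since $p_i \equiv 1 \bmod 4$; because the $p_i$ are distinct primes, these discriminants are pairwise coprime. The coprime-discriminant result then gives a ring isomorphism $O_L \cong \bigotimes_{i=1}^{\ell} O_{\mathbb{Q}(\sqrt{p_i})}$, and since $O_{\mathbb{Q}(\sqrt{p_i})} = \mathbb{Z} \oplus \mathbb{Z}\tfrac{1+\sqrt{p_i}}{2}$, expanding the tensor product exhibits $O_L$ as the free $\mathbb{Z}$-module on the products $\prod_{i \in D}\tfrac{1+\sqrt{p_i}}{2} = \lambda_D$ over subsets $D \subseteq \{1,\ldots,\ell\}$. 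The one non-formal point is the coprime-discriminant statement itself, which one either cites or proves by localizing at each rational prime and comparing relative discriminants; I expect this to be the only step requiring genuine (though routine) justification.

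For part (ii), I would verify the hypotheses of Theorem \ref{ex:unitex} for $W = \mathcal{S}_0$. By construction $\mathcal{S}_0 \subseteq O_L^* \cap (1 + 4O_L)$, so each $w \in \mathcal{S}_0$ has trivial (hence square) image in $(O_L/4O_L)^*$; and the $\mathbb{Z}/2$-linear independence of the images of $\mathcal{S}_0$ in $O_L^*/(O_L^*)^2$ is precisely the condition that no nontrivial product of elements of $\mathcal{S}_0$ lies in $(O_L^*)^2$. Hence Theorem \ref{ex:unitex} applies and yields both $\delta(N) = \delta(L)$ and the assertion that $\{\eta_T : T \subseteq \mathcal{S}_0\}$ is an $O_L$-basis of $O_N$, which is exactly (ii).

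Finally, part (iii) is the transitivity of bases in a tower of free modules: from (ii), $O_N = \bigoplus_{T \subseteq \mathcal{S}_0} O_L\,\eta_T$, and substituting the $\mathbb{Z}$-basis $\{\lambda_D\}$ of $O_L$ from (i) gives $O_N = \bigoplus_{T,D} \mathbb{Z}\,(\eta_T\lambda_D)$, so $\{\eta_T\lambda_D\}$ is a $\mathbb{Z}$-basis of $O_N$. As a consistency check, this basis has $2^{\#\mathcal{S}_0}\cdot 2^{\ell} = n(N)$ elements, matching the degree. Beyond the coprime-discriminant input in part (i), the argument is entirely formal bookkeeping, and I do not anticipate further obstacles.
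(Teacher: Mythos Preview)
Your proposal is correct and follows essentially the same route as the paper: the paper justifies the theorem in the sentence preceding it by noting that each $O_{\mathbb{Q}(\sqrt{p_i})} = \mathbb{Z}\oplus\mathbb{Z}\tfrac{1+\sqrt{p_i}}{2}$ and that the discriminants of the $\mathbb{Q}(\sqrt{p_i})$ are pairwise coprime, then invokes Theorem~\ref{ex:unitex}. Your argument is simply a more explicit unpacking of this same reasoning, including the verification of the hypotheses of Theorem~\ref{ex:unitex} and the transitivity step for part~(iii).
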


\section{Fundamental domains for finite index subgroups of  $O_N$.}
\label{s:fundament}

\subsection{Summary of the strategy to be used}
\label{s:fundstrategy}

Let  $\mathbb{Q} \subset L \subset N$ be  a two-step tower of number fields in which $N/L$ and $L/\mathbb{Q}$ are elementary abelian two-extensions.
We begin by describing a method for finding a fundamental domain for an arbitrary $\mathbb{Z}$-latttice $M$ in $\mathbb{R} \otimes_{\mathbb{Q}} N$.  

Let $\{q_i\}_{i = 1}^{[N:L]}$ be an ordered basis for $N$ over $L$.   Define $\{F_j\}_{j = 0}^{[N:L]}$ to be the filtration of $\mathbb{R} \otimes_{\mathbb{Q}} N$ defined by $$F_j = \bigoplus_{i \le j} \ (\mathbb{R} \otimes_{\mathbb{Q}} L) \cdot q_i.$$  Define $M_j = F_j \cap M$.    For $1 \le j \le [N:L]$ the quotient $M_j/M_{j-1}$ is a $\mathbb{Z}$-lattice in $F_j/F_{j-1}$ and  is identified with a $\mathbb{Z}$-lattice $M'_j$ in $(\mathbb{R} \otimes_{\mathbb{Q}} L )\cdot q_j$ via the natural isomorphism $$(\mathbb{R} \otimes_{\mathbb{Q}} L)\cdot q_j = F_j/F_{j-1}.$$ Let $V_j \subset (\mathbb{R} \otimes_{\mathbb{Q}} L)\cdot q_j$ be a fundamental domain for the translation action of $M'_j$ on $(\mathbb{R}\otimes_{\mathbb{Q}} L) \cdot q_j$. Then $$V = \bigoplus_{j =1}^{[N:L]} \ V_j \subset \bigoplus_{j = 1}^{[N:L]} \ (\mathbb{R} \otimes_{\mathbb{Q}} L) \cdot q_j = \mathbb{R} \otimes_{\mathbb{Q}} N $$ will be a fundamental domain for the translation action of $M$ on $\mathbb{R} \otimes_{\mathbb{Q}} N$.

For the rest of this section we assume that $N$ lies in an infinite family of fields  satisfying conditions (i), (ii) and (iii) of \S \ref{s:inffamily}.  In particular there is a set of distinct primes $\{p_1,\ldots,p_\ell\}$ congruent to $1$ mod $4$ with $L = \mathbb{Q}(\sqrt{p_1},\ldots,\sqrt{p_\ell})$, and $N = L(\sqrt{w}:w \in \mathcal{S}_0\}$ for a set of units $\mathcal{S}_0 \subset O_L^* \cap (1 + 4O_L)$.  For simplicity, we will also make the following hypothesis, which is satisfied by the families of $N$ constructed in \cite{BC}.

\begin{hypothesis}
\label{hyp:ortho} The set $\mathcal{S}_0$ is not empty, and for each $w \in \mathcal{S}_0$, the image of $w$ under every embedding $L \to \mathbb{R}$ is negative and
$L(\sqrt{w})$ is Galois over $\mathbb{Q}$. 
\end{hypothesis}

Let $\epsilon > 0$ be a parameter that we will let go to $0$ as $n \to \infty$.  Our goal is to describe a subgroup of finite index $O_{N,\epsilon}$ of $O_N$ together with a  fundamental domain $\mathcal{F}_{N,\epsilon}$ for the translation action of $O_{N,\epsilon}$ on $\mathbb{R} \otimes_{\mathbb{Q}} N$.

We first define an explicit basis $\{q_i\}_{i = 1}^{[N:L]}$ for $N$ over $L$ such that there is a 
basis $\{ \eta_i \}_{i = 1}^{[N:L]}$ for $O_N$ as a free $O_L$-module with the following property.  For all $j$, one has
$$\eta_j = 2^{-z(j)} q_j + \sum_{i < j} r_i q_i$$
for an integer $z(j) \ge 0$ and some $r_i \in L$.  If we set $M = O_N$ in the above discussion, we find $M'_j =  O_L 2^{-z(j)} q_j$.  However, it is not clear that there is a small fundamental domain for the translation action of $M'_j$ on $(\mathbb{R} \otimes_{\mathbb{Z}} L )\cdot 2^{-z(j)} q_j$.  So we use a Minkowski argument to construct, as a function of a positive constant $\epsilon > 0$ going to $0$, a non-zero element $h_j(\epsilon) \in O_L 2^{-z(j)}  q_j$ that has small $L^\infty$ and $L^2$ norm.  Since $O_L$ itself has a $\mathbb{Z}$-basis of small $L^\infty$ and $L^2$ norm, one can show that the sublattice $O_L h_j(\epsilon) \subset (\mathbb{R} \otimes_{\mathbb{Q}} L) \cdot 2^{-z(j)} q_j$ has a small fundamental domain $V_j(\epsilon)$ for its translation action on $(\mathbb{R} \otimes_{\mathbb{Q}} L)\cdot q_j$.   We now replace $M = O_N$ by the submodule 
$$O_{N,\epsilon} = \bigoplus_{j = 1}^{[N:L]} \ O_L \frac{h_j(\epsilon)}{2^{-z(j)} q_j} \eta_j.$$
Here $\frac{h_j(\epsilon)}{2^{-z(j)} q_j} \in O_L$ by construction, so $$O_{N,\epsilon} \subset O_N = \bigoplus_{j = 1}^{[N:L]} \ O_L \cdot \eta_j.$$  From the construction of a $\mathbb{Z}$-basis for
$O_L h_j(\epsilon) $ we have a procedure for moving elements of $(\mathbb{R} \otimes_{\mathbb{Z}} L) \cdot q_j$ into $V_j(\epsilon)$.  This leads to a procedure for moving elements of $\mathbb{R} \otimes_{\mathbb{Z}} N$ into 
$$\mathcal{F}_{N,\epsilon} = \bigoplus_{j =1}^{[N:L]} \ V_j(\epsilon)$$  by translation by elements of $O_{N,\epsilon}$.  
Theorems \ref{thm:tauthm1} and \ref{thm:tauinfthm1}  then follow on bounding the $L^2$ and $L^\infty$ radii of $\mathcal{F}_{N,\epsilon}$.  A useful fact in determining the $L^2$ radius of $\mathcal{F}_{N,\epsilon}$ is that the subspaces $(\mathbb{R} \otimes_{\mathbb{Q}} L) \cdot q_i$ are orthogonal with respect to the $L^2$ inner product on $\mathbb{R} \otimes_{\mathbb{Q}} N$.

\subsection{Orthogonal summands of  $\mathbb{R} \otimes_{\mathbb{Q}} N$.}
\label{s:innerproduct}
 Because of Hypothesis \ref{hyp:ortho}, $N$ is a  CM field, which means 
there is a non-trivial automorphism $\iota:N \to N$ of order $2$ such that for all embeddings $\sigma:N \to \mathbb{C}$ one has
$\overline{\sigma(\alpha)} = \sigma(\iota(\alpha))$ for all $\alpha \in N$.  Thus $N$ has no real embeddings, and the $n$ complex non-real embeddings occur in complex conjugate pairs $(\sigma_i , \sigma_i \circ \iota)$ for a set of embeddings $\sigma_1, \ldots, \sigma_{n/2}$. 

Let $\lambda:N \to \mathbb{C}^{n/2}$ be defined by $\lambda(\alpha) = (\sigma_i(\alpha))_{i = 1}^{n/2}$.  This extends to an isomorphism $\mathbb{R} \otimes_{\mathbb{Q}} N = \mathbb{C}^{n/2} $.  Define a real inner product on $\mathbb{C}^{n/2}$ by letting
$$\langle z, z' \rangle = \frac{1}{2} \sum_{i = 1}^{n/2} (z_i \overline{z'_i} + \overline{z_i} z'_i)$$
for  $z = (z_i)_{i = 1}^{n/2}$ and $z' =(z'_i)_{i = 1}^{n/2}$ in $\mathbb{C}^{n/2}$.
This inner product coincides with the usual Euclidean inner product on $\mathbb{R}^n$ when we identify $\mathbb{C}^{n/2}$ with $\mathbb{R}^n$
by identifying each $\mathbb{C}$ factor with $\mathbb{R}^2$ by taking real and imaginary parts.  In particular the $L^2$ norm on 
$\mathbb{R} \otimes_\mathbb{Q} N = \mathbb{C}^{n/2}$ is defined by $||z||_2 = \sqrt{\langle z , z \rangle}$.   The $L^\infty$ norm on 
$\mathbb{C}^{n/2}$ is simply the sup of the Euclidean norms of the coordinates of a vector.

\begin{lemma}
\label{lem:easynuff} If $\alpha, \beta \in N$ then the inner product $\langle \alpha ,\beta \rangle$ equals $\frac{1}{2}\mathrm{Trace}_{N/\mathbb{Q}}(\alpha \cdot \iota(\beta))$.
\end{lemma}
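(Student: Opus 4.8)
The plan is to reduce the identity $\langle \alpha, \beta \rangle = \tfrac{1}{2}\mathrm{Trace}_{N/\mathbb{Q}}(\alpha \cdot \iota(\beta))$ to a direct comparison of both sides over the complex embeddings of $N$. First I would recall that since $N$ is a CM field (Hypothesis \ref{hyp:ortho}), the $n$ complex embeddings of $N$ split into conjugate pairs $(\sigma_i, \sigma_i \circ \iota)$ for $i = 1, \ldots, n/2$, and for every embedding $\sigma: N \to \mathbb{C}$ and every $\gamma \in N$ one has $\sigma(\iota(\gamma)) = \overline{\sigma(\gamma)}$. This is the only structural input needed; everything else is formal manipulation.

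Next I would expand the right-hand side using the standard fact that $\mathrm{Trace}_{N/\mathbb{Q}}(\gamma) = \sum_{\tau} \tau(\gamma)$, where $\tau$ runs over all $n$ complex embeddings of $N$. Applying this with $\gamma = \alpha \cdot \iota(\beta)$ and grouping the embeddings into the conjugate pairs gives
\begin{equation*}
\mathrm{Trace}_{N/\mathbb{Q}}(\alpha \cdot \iota(\beta)) = \sum_{i=1}^{n/2} \Bigl( \sigma_i(\alpha \cdot \iota(\beta)) + \overline{\sigma_i(\alpha \cdot \iota(\beta))} \Bigr).
\end{equation*}
Then I would use multiplicativity of $\sigma_i$ together with $\sigma_i(\iota(\beta)) = \overline{\sigma_i(\beta)}$ to rewrite $\sigma_i(\alpha \cdot \iota(\beta)) = \sigma_i(\alpha)\,\overline{\sigma_i(\beta)}$, so the $i$-th summand becomes $\sigma_i(\alpha)\overline{\sigma_i(\beta)} + \overline{\sigma_i(\alpha)}\,\sigma_i(\beta)$.

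Finally I would match this against the definition of $\langle z, z' \rangle$ given just above the lemma. Under the identification $\mathbb{R} \otimes_{\mathbb{Q}} N \cong \mathbb{C}^{n/2}$ via $\lambda$, the element $\alpha$ corresponds to $z = (\sigma_i(\alpha))_i$ and $\beta$ to $z' = (\sigma_i(\beta))_i$, so by definition $\langle \alpha, \beta \rangle = \tfrac{1}{2}\sum_{i=1}^{n/2}\bigl(\sigma_i(\alpha)\overline{\sigma_i(\beta)} + \overline{\sigma_i(\alpha)}\,\sigma_i(\beta)\bigr)$, which is exactly $\tfrac{1}{2}$ times the trace expression computed above. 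This completes the proof. There is no real obstacle here; the only point requiring minor care is being precise about the compatibility between the embedding $\lambda$ used to define the inner product and the action of $\iota$, i.e.\ checking that the complex conjugation appearing in the inner product formula really is induced by $\iota$ on the chosen set of representatives $\sigma_1, \ldots, \sigma_{n/2}$ — but this is immediate from the defining property of $\iota$ for a CM field.
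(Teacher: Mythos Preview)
Your proposal is correct and follows essentially the same approach as the paper: expand the trace over all embeddings, group them into conjugate pairs $(\sigma_i,\sigma_i\circ\iota)$, use $\sigma_i(\iota(\beta))=\overline{\sigma_i(\beta)}$, and match against the definition of $\langle\,\cdot\,,\,\cdot\,\rangle$.
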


\begin{proof} By the definition of the trace,
\begin{eqnarray*}
\mathrm{Trace}_{N/\mathbb{Q}}(\alpha \cdot \iota(\beta)) &=& \sum_{i = 1}^{n/2} (  \sigma_i(\alpha \cdot \iota(\beta)) + (\sigma_i \circ \iota)(\alpha \cdot \iota(\beta)) )\\
&=& \sum_{i = 1}^{n/2} ( \sigma_i(\alpha) \overline{\sigma_i(\beta)} + \overline{\sigma_i(\alpha)} \sigma_i(\beta)) \\
& = &2 \langle \alpha,\beta \rangle.
\end{eqnarray*}
\end{proof}

\begin{lemma}
\label{lem:perp} When $T$ is a subset of $\mathcal{S}_0$ let $q_T = \prod_{w \in T} \sqrt{w}$.  Then the one-dimensional $L$-vector spaces $L \cdot q_T$ of $N$ as $T$ ranges over the subsets of $\mathcal{S}_0$ are mutually orthogonal.  Hence the same is true of the rank one $(\mathbb{R} \otimes_{\mathbb{Q}} L)$-submodules $(\mathbb{R}\otimes_{\mathbb{Q}} L) \cdot q_T$ of $\mathbb{R} \otimes_{\mathbb{Q}} N$.
\end{lemma}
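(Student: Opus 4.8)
The plan is to compute the inner product $\langle \alpha q_T, \beta q_{T'}\rangle$ for $\alpha,\beta \in L$ and two distinct subsets $T, T'$ of $\mathcal{S}_0$, and show it vanishes using Lemma~\ref{lem:easynuff}. By that lemma, $\langle \alpha q_T, \beta q_{T'}\rangle = \tfrac12 \mathrm{Trace}_{N/\mathbb{Q}}(\alpha q_T \cdot \iota(\beta q_{T'}))$, so it suffices to show the trace down to $\mathbb{Q}$ of $\alpha\,\iota(\beta)\, q_T\, \iota(q_{T'})$ is zero. First I would observe that each $\sqrt{w}$ with $w \in \mathcal{S}_0$ is purely imaginary under every embedding (by Hypothesis~\ref{hyp:ortho}, $w$ is totally negative), so $\iota(\sqrt{w}) = \overline{\sqrt{w}} = -\sqrt{w}$; hence $\iota(q_{T'}) = (-1)^{\#T'} q_{T'}$ and the element in question is a unit multiple of $\alpha\,\iota(\beta)\, q_T\, q_{T'} = \gamma\, q_{T \triangle T'}$ for some $\gamma \in L$, where $T \triangle T'$ is the symmetric difference (using $q_T q_{T'} = \bigl(\prod_{w \in T\cap T'} w\bigr)\, q_{T\triangle T'}$ and absorbing the product of those $w$'s into $\gamma$).

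The key step is then: if $\emptyset \neq S \subseteq \mathcal{S}_0$ and $\gamma \in L$, then $\mathrm{Trace}_{N/\mathbb{Q}}(\gamma\, q_S) = 0$. This follows from Galois theory: since $S$ is nonempty, pick $w_0 \in S$, and let $\tau \in \mathrm{Gal}(N/L)$ be the element sending $\sqrt{w_0} \mapsto -\sqrt{w_0}$ and fixing $\sqrt{w}$ for all other $w \in \mathcal{S}_0$ (such $\tau$ exists because $N/L$ is elementary abelian two with the $\sqrt{w}$, $w\in\mathcal{S}_0$, giving independent characters, by Theorem~\ref{ex:unitex} / Corollary~\ref{cor:multiquadconstruct}). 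Then $\tau(\gamma q_S) = -\gamma q_S$ since $\tau$ fixes $\gamma \in L$ and negates exactly one factor of $q_S$. Because the trace $\mathrm{Trace}_{N/\mathbb{Q}} = \mathrm{Trace}_{L/\mathbb{Q}} \circ \mathrm{Trace}_{N/L}$ and $\mathrm{Trace}_{N/L}$ is invariant under $\mathrm{Gal}(N/L)$, we get $\mathrm{Trace}_{N/L}(\gamma q_S) = \mathrm{Trace}_{N/L}(\tau(\gamma q_S)) = -\mathrm{Trace}_{N/L}(\gamma q_S)$, hence $\mathrm{Trace}_{N/L}(\gamma q_S) = 0$, and applying $\mathrm{Trace}_{L/\mathbb{Q}}$ gives $\mathrm{Trace}_{N/\mathbb{Q}}(\gamma q_S) = 0$.

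Applying this with $S = T \triangle T'$, which is nonempty precisely when $T \neq T'$, gives $\langle \alpha q_T, \beta q_{T'}\rangle = 0$ for all $\alpha, \beta \in L$, proving $L \cdot q_T \perp L \cdot q_{T'}$. The statement about $(\mathbb{R}\otimes_{\mathbb{Q}} L)\cdot q_T$ follows by $\mathbb{R}$-bilinearity and continuity of the inner product, since $L \cdot q_T$ is dense in $(\mathbb{R}\otimes_{\mathbb{Q}} L)\cdot q_T$.

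I do not expect a serious obstacle here; the one point requiring a little care is producing the automorphism $\tau$ negating a single chosen $\sqrt{w_0}$ while fixing the others — this uses the independence of the $w \in \mathcal{S}_0$ modulo $(L^*)^2$, which is part of the standing hypotheses on $\mathcal{S}_0$ and underlies Theorem~\ref{ex:unitex}. An alternative to the Galois argument, avoiding even that, is to expand the trace directly as a sum over embeddings $\sigma_i$ and pair each $\sigma_i$ with the embedding obtained by flipping the sign of $\sigma_i(\sqrt{w_0})$; the two contributions cancel. Either route is routine.
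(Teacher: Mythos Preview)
Your proof is correct and follows essentially the same route as the paper: reduce via Lemma~\ref{lem:easynuff} to a trace over $N/\mathbb{Q}$, observe that $\iota(q_{T'})=\pm q_{T'}$, and then show $\mathrm{Trace}_{N/L}$ of the resulting element vanishes by a Galois-negation argument. The paper phrases the last step as ``$q_T\cdot\iota(q_{T'})$ has square in $L$ but is not in $L$, hence generates a quadratic extension $L'/L$ and has $\mathrm{Trace}_{L'/L}=0$,'' whereas you write the product as $\gamma\,q_{T\triangle T'}$ and exhibit an explicit $\tau\in\mathrm{Gal}(N/L)$ negating it---these are the same argument in slightly different clothing.
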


\begin{proof}  We have $q_T^2 \in L$, and $L$ is a totally real field, so $\iota$ fixes $L$ and $\iota(q_T) = \pm q_T$. 
Suppose $s,s' \in L$.  Suppose $T$ and $T'$ are distinct subsets of $\mathcal{S}_0$. Then 
\begin{equation}
\label{eq:TTprime}
2 \langle s q_T, s' q_{T'} \rangle  = \mathrm{Trace}_{N/\mathbb{Q}} (s q_T \cdot \iota(s' q_{T'})) = \mathrm{Trace}_{L/\mathbb{Q}} (s s' \ \mathrm{Trace}_{N/L}(q_T \cdot \iota (q_{T'})).
\end{equation}
Here $q_T \cdot \iota(q_{T'}) = \pm q_T \cdot q_{T'} $ is not in $L$ but its square is in $L$ since $T$ and $T'$ are distinct.  Hence
$q_T \cdot \iota(q_{T'})$ generates a quadratic extension $L'$ of $L$ in $N$, and $\mathrm{Trace}_{L'/L}(q_T \cdot \iota(q_{T'}))= 0$.  It follows that $\mathrm{Trace}_{N/L}(q_T \cdot \iota(q_{T'})) = 0$, and the lemma now follows from (\ref{eq:TTprime}).
\end{proof}

\subsection{Constructing the subgroups  $O_{N,\epsilon} \subset O_N$ and the fundamental domains $\mathcal{F}_{N,\epsilon}$.}

Recall that $n = n(N) = 2^{\# \mathcal{S}_0 + \ell} $ is the degree of $N$.

\begin{lemma}
\label{lem:keypoint}  Suppose $\epsilon > 0$.  Let $T$ be a subset of $\mathcal{S}_0$.  For sufficiently large $n$, there is a basis $B_{T,\epsilon}$ over $\mathbb{Z}$ for a finite index subgroup $M_{T,\epsilon}$ of  $O_{L} \cdot q_T$ such that
\begin{equation}
\label{eq:bbounds}
|| b ||_\infty  \le n^{\epsilon} \quad \mathrm{and} \quad ||b||_2 < n^{1/2 + \epsilon}
\end{equation}
for all $b \in B_{T,\epsilon}$ and 
\begin{equation}
\label{eq:indexbound}
[O_L \cdot q_T : M_{T,\epsilon}] \le  
n^{3 \, \epsilon \,n(L)/2}.
 \end{equation}
 \end{lemma}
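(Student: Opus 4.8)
The plan is to take $M_{T,\epsilon}$ to be a rank-one $O_L$-submodule $O_L\cdot h$ of $O_L\cdot q_T$ generated by a single short element $h=s\,q_T$ with $s\in O_L\setminus\{0\}$, and to take $B_{T,\epsilon}=\{\lambda_D\,h: D\subseteq\{1,\dots,\ell\}\}$, where $\{\lambda_D\}$ is the integral basis of $O_L$ from Theorem~\ref{thm:nicebasis}(i). Since $\{\lambda_D\}$ is a $\mathbb{Z}$-basis of $O_L$, $\{\lambda_D h\}$ is a $\mathbb{Z}$-basis of $O_L\cdot h$, and $[O_L\cdot q_T:O_L\cdot h]=[O_L:sO_L]=|N_{L/\mathbb{Q}}(s)|<\infty$. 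So the lemma reduces to producing one element $s\in O_L\setminus\{0\}$ of suitably controlled archimedean size.

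The computation that makes this work is that, for $s'\in L$, both norms of $s'q_T$ are governed by the $n(L)$ real embeddings $\tau\colon L\to\mathbb{R}$ through the weights $|\tau(s')|\,|\tau(W_T)|^{1/2}$, where $W_T:=q_T^2=\prod_{w\in T}w$. Since each $w\in\mathcal{S}_0$ is a unit of $O_L$, $W_T\in O_L^*$ and $q_T\in O_N^*$; and because $s',W_T\in L$, every $\sigma\colon N\to\mathbb{C}$ satisfies $|\sigma(s'q_T)|=|(\sigma|_L)(s')|\,|(\sigma|_L)(W_T)|^{1/2}$, so
$$\|s'q_T\|_\infty=\max_{\tau\colon L\to\mathbb{R}}|\tau(s')|\,|\tau(W_T)|^{1/2}.$$
On the other hand $N$ is CM (Hypothesis~\ref{hyp:ortho}), so each $\tau$ has exactly $[N:L]/2=n/(2n(L))$ extensions among the chosen embeddings $\sigma_i$, and Lemma~\ref{lem:easynuff} gives $\|s'q_T\|_2^2=\tfrac{n}{2n(L)}\sum_\tau|\tau(s')|^2|\tau(W_T)|\le\tfrac n2\,\|s'q_T\|_\infty^2$. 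Hence it suffices to make $\|h\|_\infty$ small: the $L^2$ bound then follows within a factor $\sqrt{n/2}$.

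To produce $h$, fix $\epsilon>0$. By condition (ii) of \S\ref{s:inffamily}, for all but finitely many $N$ in the family one has $\delta(L)=\Delta(L)^{1/n(L)}<n^{\epsilon/2}$, hence $\Delta(L)^{1/2}<n^{\epsilon n(L)/2}$. In the standard Minkowski space of the totally real field $L$, the symmetric convex box $R=\{x:|\tau(x)|\le n^{\epsilon/2}|\tau(W_T)|^{-1/2}\text{ for all }\tau\}$ has volume $(2n^{\epsilon/2})^{n(L)}|N_{L/\mathbb{Q}}(W_T)|^{-1/2}=(2n^{\epsilon/2})^{n(L)}\ge 2^{n(L)}\Delta(L)^{1/2}=2^{n(L)}\operatorname{covol}(O_L)$, so Minkowski's convex body theorem gives a nonzero $s\in O_L\cap R$; set $h:=s\,q_T$, $M_{T,\epsilon}:=O_L\cdot h$, $B_{T,\epsilon}:=\{\lambda_D h\}$. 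Then $\|h\|_\infty\le n^{\epsilon/2}$, and since $\|\lambda_D\|_\infty=\prod_{i\in D}\tfrac{1+\sqrt{p_i}}{2}\le\prod_{i=1}^\ell\sqrt{p_i}=\delta(L)<n^{\epsilon/2}$ and $\|\cdot\|_\infty$ is submultiplicative, $\|\lambda_D h\|_\infty<n^\epsilon$, whence $\|\lambda_D h\|_2\le\sqrt{n/2}\,\|\lambda_D h\|_\infty<n^{1/2+\epsilon}$, which is \eqref{eq:bbounds}. Finally $[O_L\cdot q_T:M_{T,\epsilon}]=\prod_\tau|\tau(s)|\le n^{\epsilon n(L)/2}|N_{L/\mathbb{Q}}(W_T)|^{-1/2}=n^{\epsilon n(L)/2}\le n^{3\epsilon n(L)/2}$, which is \eqref{eq:indexbound}.

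The main point to get right — and what makes the argument succeed — is the displayed formula for $\|\cdot\|_\infty$: the $L^\infty$ norm on the summand $(\mathbb{R}\otimes_\mathbb{Q} L)\cdot q_T$ is a genuine maximum over only $n(L)$ archimedean weights and does \emph{not} carry the multiplicity $n/(2n(L))$ that inflates the $L^2$ norm, so the Minkowski box $R$ has volume of order $n^{\epsilon n(L)/2}$ rather than of order $n^{n(L)/2}$, and $\|h\|_\infty$ can be brought down to roughly $\delta(L)^{1/2}\le n^{\epsilon/2}$ instead of $\sqrt n$. Passing back to the $L^2$ norm costs only the benign factor $\sqrt{n/2}$, comfortably within the target $n^{1/2+\epsilon}$; and the fact that $q_T$ is a unit (equivalently $|N_{L/\mathbb{Q}}(W_T)|=1$) is precisely what keeps both $\operatorname{vol}(R)$ and the index estimate free of any contribution from $W_T$.
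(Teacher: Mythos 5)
Your proof is correct and takes essentially the same route as the paper: the paper also produces a single Minkowski short vector $h_T\in O_L q_T$ with $\|h_T\|_\infty\le\Delta(L)^{1/(2n(L))}<n^{\epsilon/2}$ (your weighted box $R\subset\mathbb{R}\otimes_\mathbb{Q}L$ is exactly its $L^\infty$ cube in $(\mathbb{R}\otimes_\mathbb{Q}L)q_T$ transported through the change of variable in diagram~\eqref{eq:origin}, both using that $W_T=q_T^2$ is a unit so the covolume is unchanged), and then sets $B_{T,\epsilon}=\{\lambda_D h_T\}$. Your index estimate via $[O_Lq_T:O_Lh]=|N_{L/\mathbb{Q}}(s)|=\prod_\tau|\tau(s)|\le n^{\epsilon n(L)/2}$ is in fact slightly sharper than the paper's Hadamard-type covolume bound giving $n^{3\epsilon n(L)/2}$, though both comfortably satisfy~\eqref{eq:indexbound}.
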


\begin{proof}  By Theorem 1.3 of \cite{BC}, we can assume there is a constant $X$ going to $\infty$ as $n \to \infty$ for which the following is true.  The primes $p_i$ are bounded above by $X$, and the degree $n = n(N)$ of $N$ satisfies 
\begin{equation}
\label{eq:degbounder}
\mathrm{log}(n) \ge c X^2/(\mathrm{log}(X))^2
\end{equation} for some universal positive constant $c$. 

From Theorem \ref{thm:nicebasis}, a  basis $B_L$ over $\mathbb{Z}$ for $O_{L}$ is given by the products
$$\lambda_D = \prod_{i \in D} \frac{1 + \sqrt{p_i}}{2}$$
as $D$ ranges over subsets of $\{1,\ldots,\ell \}$.  Since the $p_i$ are bounded by $X$, all the (real) conjugates of these products are bounded in absolute value by $\sqrt{X!}$
where $\mathrm{log} \sqrt{X!} < X \mathrm{log}(X)$.   Hence all  $m \in B_L$  have $\mathrm{log}(||m||_\infty) \le X \mathrm{log}(X)$.  From (\ref{eq:degbounder}) we have 
$$\frac{\mathrm{log}(||m||_\infty)}{\mathrm{log}(n)} \le \frac{(\mathrm{log}(X))^3}{c X}.$$
 So if $X$ is sufficiently large, 
 \begin{equation}
 \label{eq:mboundit}
 ||m||_\infty \le n^{\epsilon/2}\quad \mathrm{for}\quad m \in B_L.
 \end{equation}
   Since there are $n/2$ complex conjugate pairs of embeddings of $N$ into $\mathbb{C}$, this leads to 
 $||m||_2 \le 2^{-1/2} n^{1/2 + \epsilon/2} < n^{1/2+\epsilon}$ if $m \in B_L$.  
 
 When $T = \emptyset$ then $q_T = 1$ and the statement of Lemma \ref{lem:keypoint} holds when $B_{T,\epsilon} = B_L$ and $M_{T,\epsilon} = O_L$.  From now on we suppose that $T$ is a non-empty subset of $\mathcal{S}_0$.

 For each embedding $\sigma:L \to \mathbb{C}$ choose an extension of $\sigma$ to an embedding $\sigma:N \to \mathbb{C}$.  Let $\Psi_L$ be the set of these extensions, so $\# \Psi_L = n(L)$.

We have 
$q_T^2 = \prod_{w\in T} w \in O_L^*$.  For $\sigma \in \Psi_L$, let $e_{\sigma}(T) = 0$ if $\sigma(q_T^2) > 0$, and let $e_\sigma(T) = 1$ otherwise.  We have a diagram of vector space isomorphisms

\begin{equation}
\label{eq:origin}
 \xymatrix  {
 \mathbb{R} \otimes_{\mathbb{Q}} L \ar[d]^{\cdot q_T}\ar[rr] &&\prod_{\sigma \in \Psi_L} \mathbb{R}\ar[d]^{\cdot \prod_{\sigma \in \Psi_L} \sigma(q_T)}\\
(\mathbb{R} \otimes_{\mathbb{Q}} L) q_T \ar[rr]&& \prod_{\sigma \in \Psi_L} \mathbb{R}\cdot (\sqrt{-1})^{e_\sigma(T)}
}
\end{equation}
in which the horizontal homomorphisms are induced by the $\sigma \in \Psi_L$, the left vertical map is induced by multiplication by $q_T$, and the right vertical map on the component indexed by $\sigma $ is multiplication by $\sigma(q_T)$.   We give the terms on the right side the volume form induced by the usual Euclidean distance function on each of the factors.  The right vertical map then multiplies volume by 
$$\prod_{\sigma \in \Psi_L} |\sigma(q_T)| = |\prod_{\sigma \in \Psi_L} \sigma(q_T^2) |^{1/2} = |\mathrm{Norm}_{L/\mathbb{Q}} (q_T^2)|^{1/2}$$
since  the restriction of the elements of $\Psi_L$ to $L$ give the elements of $\mathrm{Gal}(L/\mathbb{Q})$.  However, $q_T^2 \in O_L^*$ implies
$\mathrm{Norm}_{L/\mathbb{Q}}(q_T^2) = \pm 1$.  Hence the right vertical map preserves volumes.  This map sends the image of $O_L$ to the image of $O_L q_T$.  We conclude that
\begin{equation}
\label{eq:delvol}
\Delta(L)^{1/2} = \mathrm{Vol}( (\mathbb{R}\otimes_{\mathbb{Q}} L)/ O_L) = \mathrm{Vol}((\mathbb{R} \otimes_{\mathbb{Q}} L)q_T/ O_L q_T).
\end{equation}

Suppose $\alpha \in \prod_{\sigma \in \Psi_L} \mathbb{R}\cdot (\sqrt{-1})^{e_\sigma(T)} = (\mathbb{R} \otimes_{\mathbb{Q}} L) q_T$.  The sup of the Euclidean absolute values of the coordinates of $\alpha$ is then equal to the sup $||\alpha||_\infty$ of the Euclidean norms of the images of $\alpha$ under  the $\mathbb{R}$-linear extensions of all the embeddings of $N$ into $\mathbb{C}$.
We now apply Minkowski's theorem  to the lattice
 $$O_{L} q_T \subset (\mathbb{R} \otimes_{\mathbb{Q}}L )q_T =   \prod_{\sigma \in \Psi_L} \mathbb{R}\cdot (\sqrt{-1})^{e_\sigma(T)}.$$
This and the volume 
computation (\ref{eq:delvol}) show that there is a non-zero element $h_T$ of $ O_Lq_T$ such that 
$$ ||h_T||_\infty \le  \Delta(L)^{1/(2n(L))}.$$  

 Here 
 $$\Delta(L)^{1/(2 n(L))}  = 
\Delta(N)^{1/(2n(N))} \le n^{\epsilon /2}$$
 for all large $n$ by Theorem 1.3 of \cite{BC}.  Thus
 \begin{equation}
 \label{eq:hinfinity}
 ||h_T||_\infty  \le n^{\epsilon/2}.
 \end{equation}
We now take the basis $B_{T,\epsilon}$ in the lemma to be $\{h_T \cdot m: m \in B_L\}$.
 
Suppose $m \in B_L$, so that we have already shown in (\ref{eq:mboundit}) that  $||m||_\infty \le n^{\epsilon / 2}$. We then have
  \begin{equation}
 \label{eq:hfind2}
  ||h_T \cdot m||_\infty \le n^{\epsilon/2} \cdot n^{\epsilon/2} = n^\epsilon.
  \end{equation} 
  This gives  
 $$||h_T \cdot m||_2^2 \le (n/2) \cdot n^{2 \epsilon} \le n^{1 + 2\epsilon}$$ since $N$ has $n$ complex embeddings.  This proves (\ref{eq:bbounds}).
 
 For $\alpha \in (\mathbb{R} \otimes_{\mathbb{Q}} L) q_T = \prod_{\sigma \in \Psi_L} \mathbb{R}\cdot (\sqrt{-1})^{e_\sigma(T)}$ let $||\alpha||'_2$ be the square root of the sum of the squares of the coordinates of $\alpha$.  Thus $|| \ ||'_2$ is just the usual Euclidean norm on $(\mathbb{R} \otimes_{\mathbb{Q}} L)q_T$ as a real vector space of dimension $n(L)$.
 Since $L$ is totally real with $n(L)$ real embeddings we find from (\ref{eq:hfind2}) that  
  \begin{equation}
  \label{eq:lambound}
  ||h_T\cdot m||'_2 \le \sqrt{n(L)}\  n^\epsilon \quad \mathrm{for} \quad m \in B_L.
  \end{equation}
  Here $\sqrt{n(L)} < n^{\epsilon/2}$ when $n$ is sufficiently large by (ii) of \S \ref{s:inffamily}.
 Since $\{h_T \cdot m: m \in B_L\} = B_{T,\epsilon}$ is a basis for $M_{T,\epsilon}$ over $\mathbb{Z}$ we find from this and (\ref{eq:lambound}) that 
 $$\mathrm{Vol}((\mathbb{R} \otimes_{\mathbb{Q}} L) q_T / M_{T,\epsilon}) \le (\sqrt{n(L)} \ n^\epsilon)^{n(L)} \le  n^{3 \, \epsilon \, n(L)/2}.$$
The index $[O_L q_T:M_{T,\epsilon}]$ is the ratio of the covolumes of $M_{T,\epsilon}$ and $O_L q_T$.  Since $O_L q_T$ has covolume  $\Delta(L)^{1/2} \ge 1$ by
(\ref{eq:delvol}), we conclude
$[M_{T,\epsilon}:O_L q_T] \le n^{3 \, \epsilon\,  n(L)/2}$. This completes the proof of Lemma \ref{lem:keypoint}.
   \end{proof}
 
 \begin{remark}
 \label{rem:compute}  If $T$ is empty $B_{T,\epsilon}$ can be taken to be the $\mathbb{Z}$-basis $B_L$ of $O_L$ described in Theorem \ref{thm:nicebasis}.   Suppose now that $T$ is not empty.  One can find an  element $h_T \in O_L q_T$ for which (\ref{eq:hinfinity}) holds by solving a short vector problem for a lattice of rank $n(L) = 2^\ell$.   Note that $n(L)$ is much smaller than $n(N)$.  \end{remark}

 Recall from Theorem \ref{thm:nicebasis} that $O_N$ has a basis as a free $O_{L}$-module consisting of the products
 $$\eta_T = \prod_{w \in T} \frac{1 + \sqrt{w}}{2}$$
 as $T$ ranges over all subsets of $\mathcal{S}_0$.  Expanding the right hand side of this product shows $\eta_T$ is a $\mathbb{Q}$-linear combination of the products
 $$q_{T'} = \prod_{w \in T'} \sqrt{w}$$
 as $T'$ ranges over subsets of $T$.  For example, when $T' = T$, we get the term $2^{-\# T} q_T$ on expanding the product for $\eta_T$.

 \begin{definition}
 \label{dfn:funda}  Suppose $\epsilon > 0$ and $T \subset \mathcal{S}_0$.  Let $\mathcal{F}_{T,\epsilon}$ be the subset of $\mathbb{R} \otimes_{\mathbb{Q}} N$ consisting of all vectors 
 $$\sum_{b \in B_{T,\epsilon}} r_b b$$
 in which the real numbers $r_b$ satisfy 
 \begin{equation}
 \label{eq:rbound}
 0 \le r_b < \frac{1}{2^{\# T}}.
 \end{equation}
 Define $\mathcal{F}_{N,\epsilon}$ to be the direct sum of the $\mathcal{F}_{T,\epsilon}$ as $T$ ranges over all subsets of $\mathcal{S}_0$.
 Let $O_{N,\epsilon}$ be the subgroup of $O_N$ generated by the elements $(b q_T^{-1}) \eta_T$ as $T$ ranges over all subsets of $\mathcal{S}_0$ and $b$ ranges over $B_{T,\epsilon}$. 
 \end{definition}
 
 Note that $O_{N,\epsilon}$ has finite index in $O_N$ since the $b \in B_{T,\epsilon}$ give a basis over $\mathbb{Z}$ for a finite index subgroup $M_{T,\epsilon}$ of $O_L \cdot q_T \subset O_N$ and the $\eta_T$ give a basis for $O_N$ over $O_L$ as $T$ ranges over subsets of $\mathcal{S}_0$.
 
 \begin{theorem}
 \label{thm:mainthm1}  Suppose $\epsilon > 0$.  Then $\mathcal{F}_{N,\epsilon}$ is a fundamental domain for the translation action of $O_{N,\epsilon}$ on $\mathbb{R} \otimes_{\mathbb{Q}} N$.  There is a procedure for moving an arbitrary element of $\mathbb{R} \otimes_{\mathbb{Q}} N$ into $\mathcal{F}_{N,\epsilon}$ by translation by an element of $O_{N,\epsilon}$.  
  \end{theorem}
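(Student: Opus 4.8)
The plan is to apply the general construction of \S\ref{s:fundstrategy} with the lattice $M = O_{N,\epsilon}$ and the ordered $L$-basis of $N$ given by the products $q_T = \prod_{w\in T}\sqrt{w}$, where $T$ runs over the subsets of $\mathcal{S}_0$, listed as $T_1,\dots,T_{[N:L]}$ in any order refining inclusion (for instance by increasing cardinality), so that $T_a\subsetneq T_b$ forces $a<b$. Write $q_i=q_{T_i}$ and $F_j=\bigoplus_{i\le j}(\mathbb{R}\otimes_{\mathbb{Q}}L)q_i$. The heart of the argument is to identify, for each $j$, the lattice $M'_j$ that the construction of \S\ref{s:fundstrategy} attaches to $F_j/F_{j-1}\cong(\mathbb{R}\otimes_{\mathbb{Q}}L)q_j$, and to recognize $\mathcal{F}_{T_j,\epsilon}$ as a fundamental domain for it.

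First I would record the triangularity. Expanding $\eta_T=\prod_{w\in T}\frac{1+\sqrt{w}}{2}$ shows $\eta_T=2^{-\#T}q_T+\sum_{T'\subsetneq T}r_{T'}q_{T'}$ with $r_{T'}\in\mathbb{Q}$; since every $T'\subsetneq T_j$ precedes $T_j$ in our ordering, $\eta_{T_j}\equiv 2^{-\#T_j}q_j\pmod{F_{j-1}}$. Because $B_{T,\epsilon}$ is a $\mathbb{Z}$-basis of the finite-index subgroup $M_{T,\epsilon}\subset O_L q_T$, the generators $(b\,q_T^{-1})\eta_T$, $b\in B_{T,\epsilon}$, span over $\mathbb{Z}$ the group $(q_T^{-1}M_{T,\epsilon})\eta_T\subset O_L\eta_T$; since the $\eta_T$ form an $O_L$-basis of $O_N$ this gives the internal direct sum $O_{N,\epsilon}=\bigoplus_T(q_T^{-1}M_{T,\epsilon})\eta_T$ (in particular $O_{N,\epsilon}\subset O_N$ with finite index). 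Using the triangularity exactly as in \S\ref{s:fundstrategy}, an element $\sum_T c_T\eta_T\in O_{N,\epsilon}$ (with $c_T\in q_T^{-1}M_{T,\epsilon}\subset O_L$) lies in $F_j$ if and only if $c_{T_k}=0$ for all $k>j$; hence $O_{N,\epsilon}\cap F_j=\bigoplus_{i\le j}(q_{T_i}^{-1}M_{T_i,\epsilon})\eta_{T_i}$. Projecting the $j$-th summand into $F_j/F_{j-1}\cong(\mathbb{R}\otimes_{\mathbb{Q}}L)q_j$ kills the lower-order terms of $\eta_{T_j}$, so $M'_j=(q_{T_j}^{-1}M_{T_j,\epsilon})\cdot 2^{-\#T_j}q_{T_j}=2^{-\#T_j}M_{T_j,\epsilon}$, a full-rank lattice inside $O_L q_{T_j}\subset(\mathbb{R}\otimes_{\mathbb{Q}}L)q_{T_j}$ (full rank because $M_{T_j,\epsilon}$ has finite index in $O_L q_{T_j}$, which is also why the filtration is rational for $O_{N,\epsilon}$ and the construction of \S\ref{s:fundstrategy} applies).

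Next I would observe that $\mathcal{F}_{T,\epsilon}$ is precisely the standard fundamental parallelepiped of $2^{-\#T}M_{T,\epsilon}$: indeed $\{2^{-\#T}b:b\in B_{T,\epsilon}\}$ is a $\mathbb{Z}$-basis of that lattice, and the substitution $r_b=2^{-\#T}s_b$ turns the box $0\le r_b<2^{-\#T}$ defining $\mathcal{F}_{T,\epsilon}$ into $0\le s_b<1$. Thus $\mathcal{F}_{T_j,\epsilon}$ is a fundamental domain for $M'_j$ acting on $(\mathbb{R}\otimes_{\mathbb{Q}}L)q_{T_j}$, and the construction of \S\ref{s:fundstrategy} yields that $\mathcal{F}_{N,\epsilon}=\bigoplus_T\mathcal{F}_{T,\epsilon}=\bigoplus_j\mathcal{F}_{T_j,\epsilon}$ is a fundamental domain for the translation action of $O_{N,\epsilon}$ on $\mathbb{R}\otimes_{\mathbb{Q}}N$. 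For the explicit reduction procedure, given $x\in\mathbb{R}\otimes_{\mathbb{Q}}N$, I would process $j=[N:L],[N:L]-1,\dots,1$: at step $j$ the image of the current $x$ in $F_j/F_{j-1}\cong(\mathbb{R}\otimes_{\mathbb{Q}}L)q_{T_j}$ is reduced into $\mathcal{F}_{T_j,\epsilon}$ by subtracting the unique $\mathbb{Z}$-linear combination of the basis vectors $2^{-\#T_j}b$ that does so; one realizes this inside $O_{N,\epsilon}$ by subtracting the corresponding combination of the generators $(b\,q_{T_j}^{-1})\eta_{T_j}$, and since these generators lie in $F_j$ the subtraction does not disturb the components already placed in $\mathcal{F}_{T_k,\epsilon}$ for $k>j$. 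After step $1$, $x$ lies in $\mathcal{F}_{N,\epsilon}$ and has been altered only by an element of $O_{N,\epsilon}$; uniqueness of the representative is immediate from the product structure and from each $\mathcal{F}_{T_j,\epsilon}$ being an exact fundamental domain for $M'_j$.

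The main obstacle is the middle step: verifying that passing to the graded pieces of the filtration genuinely "untwists" the unipotent shear relating $O_{N,\epsilon}$ to $\bigoplus_T 2^{-\#T}M_{T,\epsilon}$, so that the lattice attached to $F_j/F_{j-1}$ is \emph{exactly} $2^{-\#T_j}M_{T_j,\epsilon}$ with the intended $\mathbb{Z}$-basis $\{2^{-\#T_j}b\}$. This is where the ordering refining inclusion and the precise shape $\eta_T=2^{-\#T}q_T+(\text{lower order})$ are essential. The remaining ingredients — the direct-sum description of $O_{N,\epsilon}$, the rationality of the filtration, and the identification of $\mathcal{F}_{T,\epsilon}$ with a fundamental parallelepiped — are routine once this identification is in place.
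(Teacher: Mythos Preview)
Your proof is correct and follows essentially the same approach as the paper: both extend the inclusion partial order on subsets $T\subset\mathcal{S}_0$ to a total order, exploit the triangular form $\eta_T=2^{-\#T}q_T+(\text{lower order terms})$, and reduce top-down by subtracting the appropriate $\mathbb{Z}$-combination of the generators $(b\,q_T^{-1})\eta_T$ at each level. The only difference is one of presentation---you package the argument explicitly in the abstract filtration framework of \S\ref{s:fundstrategy} and identify $M'_j=2^{-\#T_j}M_{T_j,\epsilon}$ as a graded piece, whereas the paper writes out the same induction more directly in coordinates.
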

 
 \begin{proof}
Extend the inclusion partial ordering of the subsets $T$ of $\mathcal{S}_0$ to a total ordering. We expand elements $\alpha\in \mathbb{R}\otimes_{\mathbb{Q}} N$ in a unique way as  
\begin{equation}
\label{eq:expansionalpha}
\alpha = \sum_{T \subset \mathcal{S}_0} \alpha_T
\end{equation}
where  $\alpha_T  \in (\mathbb{R} \otimes_{\mathbb{Q}} L) \cdot q_T$.    Suppose that  $T = T(\alpha)$ is the maximal element of $\mathcal{S}_0$ with respect to the above total ordering that occurs non-trivially in the expansion of $\alpha$. 
We will then call $\alpha_T$ the highest degree term of $\alpha$.  We can write
$$ \alpha_T = \sum_{b \in B_{T,\epsilon}} s_b 2^{-\# T} b$$
for some unique real numbers $s_b$.  
For each $b \in B_{T,\epsilon}$, let $\lfloor s_b \rfloor$ be the largest integer less than or equal to $s_b$.
Then 
\begin{equation}
\label{eq:nicit}
\alpha_T =  \sum_{b \in B_{T,\epsilon}} \lfloor s_b \rfloor\cdot  2^{-\# T} \cdot b + 
\sum_{b \in B_{T,\epsilon}} r_b \cdot  b
\end{equation}
where $0 \le r_b  = 2^{-\# T} (s_b - \lfloor s_b \rfloor) < 2^{-\#T}$.  The sum
\begin{equation}
\label{eq:taudef}
\tau = \sum_{b \in B_{T,\epsilon}} \lfloor s_b \rfloor \cdot b \cdot q_T^{-1} \cdot \eta_T = \sum_{b \in B_{T,\epsilon}} \lfloor s_b \rfloor \cdot b \cdot q_T^{-1} \cdot \left( \sum_{T' \subset T} 2^{-\# T} q_{T'} \right ) 
\end{equation} lies
in $O_{N,\epsilon}$ and has highest degree term
 $$ \sum_{b \in B_{T,\epsilon}} \lfloor s_b \rfloor\cdot 2^{-\# T} b.$$

Consider the expansion of  
$$\beta = \alpha  - \tau$$
as an $(\mathbb{R} \otimes_{\mathbb{Q}} L)$-linear combination of the $q_{T'}$ as $T'$ ranges over subsets of $\mathcal{S}_0$.  In $\tau$, only the $q_{T'}
$ associated to $T' \subset T$ occur.  So subtracting $\tau$ from $\alpha$ affects only the coefficients of these $q_{T'}$.  The highest degree term in $\beta$ is
$$\sum_{b \in B_{T,\epsilon}} r_b \cdot  b$$
and this lies in the summand of $\mathcal{F}_{T,\epsilon}$ of $\mathcal{F}_{N,\epsilon}$.  Furthermore, $\tau$ is the unique $\mathbb{Z}$-linear combination of elements of $\{b q_T^{-1} \eta_T: b \in B_{T,\epsilon}\}$  for which the resulting $\beta$ will have its highest degree term in $\mathcal{F}_{T,\epsilon}$, since $B_{T,\epsilon}$ is a basis for
$(\mathbb{R}\otimes_{\mathbb{Q}} L) \cdot q_T$ over  $\mathbb{R}$.  
By induction, we can continue this process in order to find explicitly a unique element of $O_{N,\epsilon}$ that brings $\alpha$ into $\mathcal{F}_{N,\epsilon}$.  \end{proof} 

The fundamental domain $\mathcal{F}_{N,\epsilon}$ we have constructed for $O_{N,\epsilon}$ certainly contains a fundamental domain for $O_N$, but it is a natural question how much larger the former is than the latter. We will find a bound for the volume of the former in terms of the volume of the latter by finding an upper bound on the index of $O_{N,\epsilon}$ in $O_N$. 
\begin{theorem}
\label{thm:ohyeah}  For all $\epsilon > 0$, if $n = n(N)$ is sufficiently large, then 
$$[O_N:O_{N,\epsilon}] \le n^{3\, n\, \epsilon/2}.$$
\end{theorem}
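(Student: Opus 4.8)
The plan is to exploit the direct-sum structure of $O_N$ and $O_{N,\epsilon}$ coming from the free $O_L$-basis $\{\eta_T\}_{T\subset\mathcal{S}_0}$ of $O_N$, so that the index factors as a product of the graded indices $[O_Lq_T:M_{T,\epsilon}]$ that were already bounded in Lemma \ref{lem:keypoint}, and then to finish with a counting argument comparing the number of subsets of $\mathcal{S}_0$ to $n(N)/n(L)$.

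First I would record that, by Theorem \ref{thm:nicebasis}(ii), $O_N=\bigoplus_{T\subset\mathcal{S}_0}O_L\eta_T$ as abelian groups. For each $T$, Lemma \ref{lem:keypoint} gives $B_{T,\epsilon}\subset O_Lq_T$, so $bq_T^{-1}\in O_L$ for every $b\in B_{T,\epsilon}$, and $\{bq_T^{-1}:b\in B_{T,\epsilon}\}$ is a $\mathbb{Z}$-basis of the finite-index subgroup $M_{T,\epsilon}q_T^{-1}$ of $O_L$, with $[O_L:M_{T,\epsilon}q_T^{-1}]=[O_Lq_T:M_{T,\epsilon}]$. Since by Definition \ref{dfn:funda} the group $O_{N,\epsilon}$ is generated by the elements $(bq_T^{-1})\eta_T$, we get $O_{N,\epsilon}=\sum_{T\subset\mathcal{S}_0}(M_{T,\epsilon}q_T^{-1})\eta_T$, and this sum is direct because the $\eta_T$ are $O_L$-linearly independent; hence $O_{N,\epsilon}=\bigoplus_{T\subset\mathcal{S}_0}(M_{T,\epsilon}q_T^{-1})\eta_T$, a $\mathbb{Z}$-submodule of $O_N$ respecting the decomposition above.

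Next I would apply multiplicativity of the index over this direct sum, together with the fact that multiplication by $\eta_T$ (resp.\ by $q_T$) is a group isomorphism $O_L\xrightarrow{\sim}O_L\eta_T$ (resp.\ $O_L\xrightarrow{\sim}O_Lq_T$), to obtain
$$[O_N:O_{N,\epsilon}]=\prod_{T\subset\mathcal{S}_0}[O_L:M_{T,\epsilon}q_T^{-1}]=\prod_{T\subset\mathcal{S}_0}[O_Lq_T:M_{T,\epsilon}].$$
By the bound (\ref{eq:indexbound}) of Lemma \ref{lem:keypoint} — whose ``sufficiently large $n$'' threshold depends only on $\epsilon$ and hence is uniform in $T$ — each factor is at most $n^{3\epsilon n(L)/2}$ once $n$ is large. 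The number of subsets $T$ of $\mathcal{S}_0$ is $2^{\#\mathcal{S}_0}$, and since $n=n(N)=2^{\#\mathcal{S}_0+\ell}$ while $n(L)=2^{\ell}$ we have $2^{\#\mathcal{S}_0}=n/n(L)$. Multiplying the bounds therefore gives
$$[O_N:O_{N,\epsilon}]\le\bigl(n^{3\epsilon n(L)/2}\bigr)^{n/n(L)}=n^{3\epsilon n/2},$$
which is the desired estimate.

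The argument is essentially bookkeeping, so I do not anticipate a genuine obstacle. The only points requiring care are (a) verifying that the sum defining $O_{N,\epsilon}$ is direct, so that the index is genuinely multiplicative rather than merely bounded by the product — this follows from the $O_L$-freeness of $\{\eta_T\}$ in Theorem \ref{thm:nicebasis}(ii) — and (b) checking that the threshold on $n$ in Lemma \ref{lem:keypoint} may be taken independently of $T$, so that raising the per-$T$ bound to the power $2^{\#\mathcal{S}_0}$ is legitimate.
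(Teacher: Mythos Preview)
Your proof is correct and follows essentially the same route as the paper's: both arguments decompose $O_N=\bigoplus_T O_L\eta_T$ and $O_{N,\epsilon}=\bigoplus_T (M_{T,\epsilon}q_T^{-1})\eta_T$, deduce that the index factors as $\prod_T [O_Lq_T:M_{T,\epsilon}]$, apply the bound (\ref{eq:indexbound}) from Lemma~\ref{lem:keypoint}, and finish using $2^{\#\mathcal{S}_0}\cdot n(L)=n$. You have simply spelled out in more detail the points the paper treats as evident, namely the directness of the sum and the uniformity of the threshold in $T$.
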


\begin{proof}  The group 
$O_{N,\epsilon}$ (resp. $O_{N}$) is the direct sum of the groups  $M_{T,\epsilon} q_T^{-1} \eta_T$ (resp. $O_L \eta_T$) as $T$ ranges over subsets of $\mathcal{S}_0$.   Hence (\ref{eq:indexbound}) gives 
\begin{equation}
\label{eq:niceenuff}
[O_N:O_{N,\epsilon}]= \prod_{T \subset \mathcal{S}_0} [O_L q_T:M_{T,\epsilon}] \le (n^{3 \, \epsilon/2 \, n(L)})^{ 2^{\# \mathcal{S}_0}} = n^{3\, n\, \epsilon/2}
\end{equation}
since $n = n(N) = n(L) \cdot 2^{\# \mathcal{S}_0}$.  
\end{proof}

Here is a consequence of this result.  The covolume of $O_N$ in $\mathbb{R} \otimes_{\mathbb{Q}} N$ is
$$2^{-n/2} \sqrt{\Delta(N)} \le n^{\epsilon \, n/2}.$$
Theorem \ref{thm:ohyeah} shows that the covolume of $O_{N,\epsilon}$ in $\mathbb{R} \otimes_{\mathbb{Q}} N$ is bounded by 
$$n^{\epsilon \, n/2} \cdot n^{3\, n\, \epsilon/2} = n^{2\, \epsilon\, n}.$$

 \subsection{Proof of Theorems \ref{thm:tauthm1} and  \ref{thm:tauinfthm1}}
 \label{s:boundfundamental}
 
\begin{definition} The $L^2$ radius $||\mathcal{F}_{N,\epsilon}||_2$ of $\mathcal{F}_{N,\epsilon}$ is defined to be the supremum of $||\alpha ||_2$ as $\alpha$  ranges over $\mathcal{F}_{N,\epsilon}$. Define the 
$L^\infty$ radius $||\mathcal{F}_{N,\epsilon}||_\infty$  similarly.
\end{definition}

\begin{lemma}
\label{lem:upperb} For all $\epsilon > 0$, if $n = n(N)$ is sufficiently large, one has 
\begin{equation}
\label{eq:upboundit}
||\mathcal{F}_{N,\epsilon}||_\infty \le n^{\mathrm{log}_2(3/2) + 2\epsilon} \quad \mathrm{and} \quad 
 ||\mathcal{F}_{N,\epsilon}||_2 \le  n^{\frac{1}{2}+ \mathrm{log}_2(\sqrt{5}/2) + 2\epsilon}.
\end{equation} The upper bounds in Theorems  \ref{thm:tauthm1}  and  \ref{thm:tauinfthm1} hold.
\end{lemma}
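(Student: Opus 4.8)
The plan is to estimate separately the contribution of each of the pieces $\mathcal{F}_{T,\epsilon}$ of $\mathcal{F}_{N,\epsilon}$ and then recombine; the recombination reduces to the two elementary identities $\sum_{T\subseteq\mathcal{S}_0}2^{-\#T}=(3/2)^{\#\mathcal{S}_0}$ and $\sum_{T\subseteq\mathcal{S}_0}2^{-2\#T}=(5/4)^{\#\mathcal{S}_0}$. First I would fix $\alpha\in\mathcal{F}_{N,\epsilon}$ and write $\alpha=\sum_{T\subseteq\mathcal{S}_0}\alpha_T$ with $\alpha_T\in\mathcal{F}_{T,\epsilon}$. By Definition \ref{dfn:funda}, $\alpha_T=\sum_{b\in B_{T,\epsilon}}r_b\, b$ with $0\le r_b<2^{-\#T}$, and $B_{T,\epsilon}$ is a $\mathbb{Z}$-basis of a finite-index subgroup of the rank-$n(L)$ lattice $O_L\cdot q_T$, so $\#B_{T,\epsilon}=n(L)$. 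Combining this with the bounds $||b||_\infty\le n^{\epsilon}$ and $||b||_2<n^{1/2+\epsilon}$ of (\ref{eq:bbounds}) and the triangle inequality gives $||\alpha_T||_\infty\le n(L)\,2^{-\#T}\,n^{\epsilon}$ and $||\alpha_T||_2\le n(L)\,2^{-\#T}\,n^{1/2+\epsilon}$.

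Next I would combine over $T$. Since $||\cdot||_\infty$ obeys the triangle inequality,
\[
||\alpha||_\infty\le\sum_{T\subseteq\mathcal{S}_0}||\alpha_T||_\infty\le n(L)\,n^{\epsilon}\sum_{T\subseteq\mathcal{S}_0}2^{-\#T}=n(L)\,n^{\epsilon}\,(3/2)^{\#\mathcal{S}_0}.
\]
Because $n=n(N)=n(L)\cdot 2^{\#\mathcal{S}_0}$, we have $2^{\#\mathcal{S}_0}\le n$, hence $(3/2)^{\#\mathcal{S}_0}=(2^{\#\mathcal{S}_0})^{\mathrm{log}_2(3/2)}\le n^{\mathrm{log}_2(3/2)}$; and $n(L)\le n^{\epsilon}$ for all large $n$ by \S\ref{s:inffamily}(ii). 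This yields the first inequality of (\ref{eq:upboundit}). For the $L^2$ radius I would instead use the orthogonality of Lemma \ref{lem:perp}: the subspaces $(\mathbb{R}\otimes_{\mathbb{Q}}L)q_T$ containing the $\mathcal{F}_{T,\epsilon}$ are pairwise orthogonal, so by the Pythagorean theorem
\[
||\alpha||_2^2=\sum_{T\subseteq\mathcal{S}_0}||\alpha_T||_2^2\le n(L)^2\,n^{1+2\epsilon}\sum_{T\subseteq\mathcal{S}_0}2^{-2\#T}=n(L)^2\,n^{1+2\epsilon}\,(5/4)^{\#\mathcal{S}_0}.
\]
The same estimate gives $(5/4)^{\#\mathcal{S}_0}\le n^{\mathrm{log}_2(5/4)}$, and absorbing $n(L)^2\le n^{2\epsilon}$ leaves $||\alpha||_2^2\le n^{1+\mathrm{log}_2(5/4)+4\epsilon}$. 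Taking square roots and using $\tfrac12\mathrm{log}_2(5/4)=\mathrm{log}_2(\sqrt5/2)$ gives the second inequality of (\ref{eq:upboundit}).

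Finally, to deduce the theorems, I would observe that by Theorem \ref{thm:mainthm1} the set $\mathcal{F}_{N,\epsilon}$ is a fundamental domain for the translation action of $O_{N,\epsilon}$, while $O_{N,\epsilon}\subseteq O_N$ by Definition \ref{dfn:funda}. Hence for every $x\in\mathbb{R}\otimes_{\mathbb{Q}}N$ there is an $m\in O_{N,\epsilon}\subseteq O_N$ with $x-m\in\mathcal{F}_{N,\epsilon}$, so the distance from $x$ to $O_N$ in the $L^2$ (respectively $L^\infty$) norm is at most $||\mathcal{F}_{N,\epsilon}||_2$ (respectively $||\mathcal{F}_{N,\epsilon}||_\infty$). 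For any norm the radius of the Voronoi cell of a full-rank lattice equals its covering radius in that norm, so $||V_2(N)||_2\le||\mathcal{F}_{N,\epsilon}||_2$ and $||V_\infty(N)||_\infty\le||\mathcal{F}_{N,\epsilon}||_\infty$; together with (\ref{eq:upboundit}) this bounds $||V_2(N)||_2$ and $||V_\infty(N)||_\infty$ for all sufficiently large $N$ in the family. Since the family contains fields of arbitrarily large degree and $\epsilon>0$ is arbitrary, these bounds give $\nu_2\le\tfrac12+\mathrm{log}(\sqrt5/2)/\mathrm{log}(2)$ and $\nu_\infty\le\mathrm{log}(3/2)/\mathrm{log}(2)$, which are the upper bounds asserted in Theorems \ref{thm:tauthm1} and \ref{thm:tauinfthm1}. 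I expect the only step needing genuine care to be this last one --- specifically the passage through the proper sublattice $O_{N,\epsilon}$, i.e. the easy but not automatic fact that a fundamental domain for a finite-index sublattice still bounds the covering radius of the whole lattice; everything else is bookkeeping with the two binomial sums and the identity $n=n(L)\,2^{\#\mathcal{S}_0}$.
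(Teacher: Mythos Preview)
Your proof is correct and follows essentially the same approach as the paper: bound each $\alpha_T$ using the triangle inequality together with the estimates $\|b\|_\infty\le n^\epsilon$, $\|b\|_2\le n^{1/2+\epsilon}$, and $\#B_{T,\epsilon}=n(L)\le n^\epsilon$; then sum over $T$ using the binomial identities $\sum_T 2^{-\#T}=(3/2)^{\#\mathcal{S}_0}$ and $\sum_T 2^{-2\#T}=(5/4)^{\#\mathcal{S}_0}$, invoking Lemma~\ref{lem:perp} in the $L^2$ case to replace the triangle inequality by Pythagoras. The only difference is cosmetic: the paper absorbs $n(L)\le n^\epsilon$ immediately, while you carry $n(L)$ explicitly and substitute at the end; and you spell out more carefully than the paper the final step that the covering radius of $O_N$ is bounded by the radius of a fundamental domain for the sublattice $O_{N,\epsilon}$, which the paper leaves implicit.
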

\begin{proof} In the following arguments, $n$ is assumed to be sufficiently large.  By Lemma \ref{lem:keypoint},
$|| b ||_\infty  \le n^{\epsilon}$ for all $b \in B_{T,\epsilon}$ and all $T \subset \mathcal{S}_0$.  In the definition of $\mathcal{F}_{N,\epsilon}$ in Definition \ref{dfn:funda}, the coefficient of each such $b$ is bounded by $2^{-\# T}$. Here $\# B_{T,\epsilon} = n(L) = 2^\ell \le n^\epsilon$.  The number of $T$ with $\# T = j$ is the binomial coefficient $\left ( {\# \mathcal{S}_0}\atop {j} \right )$.  The triangle inequality and the binomial theorem now show that if $\alpha \in \mathcal{F}_{N,\epsilon}$ then
\begin{equation}
\label{eq:alphainf}
||\alpha||_\infty  \le  \sum_{T \subset \mathcal{S}_0} 
2^{-\# T} n^{2 \epsilon} \le (1 + 1/2)^{\# \mathcal{S}_0} n^{2 \epsilon} 
= (3/2)^{\# \mathcal{S}_0} n^{2 \epsilon}.
\end{equation}
Since $n = 2^{\# \mathcal{S}_0 + \ell} $ we have
$$\mathrm{log}_2((3/2)^{\# \mathcal{S}_0})  = \#\mathcal{S}_0 \cdot \mathrm{log}_2(3/2) \le \mathrm{log}_2(n)\cdot  \mathrm{log}_2(3/2).$$
so $(3/2)^{\# \mathcal{S}_0} \le n^{\mathrm{log}_2(3/2)}$ and we have  the first inequality in (\ref{eq:upboundit})  from 
(\ref{eq:alphainf}).

One shows  the second inequality of (\ref{eq:upboundit}) in a similar way, but one can take advantage of the fact that 
$ L q_T$ is perpendicular to $L q_{T'}$ if $T \ne T'$.  We know
from Lemma \ref{lem:keypoint} that $||b||_2 \le n^{1/2 + \epsilon}$ for all $b \in B_{T,\epsilon}$ and all $T$. As above, $\# B_{T,\epsilon} \le n^\epsilon$ for all $T$.  
So if $\alpha \in \mathcal{F}_{N,\epsilon}$ then 
$$(||\alpha||_2)^2 \le  \sum_{T \subset \mathcal{S}_0}  2^{-2\# T} n^{1 + 4\epsilon} \le (1 + 1/4)^{\# \mathcal{S}_0} n^{1 + 4\epsilon} 
=  (5/4)^{\# \mathcal{S}_0}n^{1 + 4\epsilon} .$$
This and $n = 2^{\# \mathcal{S}_0 + \ell} $ give the second inequality in (\ref{eq:upboundit}).

We now have the upper bounds in Theorems \ref{thm:tauthm1} and  \ref{thm:tauinfthm1} by letting $\epsilon$ go to $0$, noting that for each $\epsilon > 0$ the above arguments apply for sufficiently large $n$.  
\end{proof}

The lower bound in Theorem \ref{thm:tauinfthm1} is trivial by a volume argument.   
The following result establishes the lower bound in Theorem \ref{thm:tauthm1}. 

\begin{lemma}
\label{lem:arithgeom} Let $F$ be an arbitrary number field and suppose $0 \ne \alpha \in O_F$. Then
the $L^2$ norm $||\alpha||_2 = ||\alpha||_{2,F}$ of $\alpha$  satisfies
\begin{equation}
\label{eq:theL2bound}
||\alpha||_2 \ge \sqrt{n(F)/2}.
\end{equation}
Thus the $L^2$ ball in $\mathbb{R} \otimes_{\mathbb{Q}} F$ of any radius less than  $\sqrt{n(F)/2}$ 
intersects $O_F$ in $\{0\}$.

\end{lemma}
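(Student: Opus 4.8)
The plan is to combine the arithmetic--geometric mean inequality with the fact that the norm $N_{F/\mathbb{Q}}(\alpha)$ of a nonzero algebraic integer is a nonzero rational integer, hence has absolute value at least $1$. The only thing requiring care is the normalization of the $L^2$ norm at the complex places.

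First I would fix notation. Let $r_1$ be the number of real embeddings $\sigma_1,\dots,\sigma_{r_1}$ of $F$, let $r_2$ be the number of conjugate pairs of complex embeddings, and choose one embedding $\tau_1,\dots,\tau_{r_2}$ from each pair, so $n(F)=r_1+2r_2$. Under the identification $\mathbb{R}\otimes_{\mathbb{Q}}F\cong\mathbb{R}^{r_1}\times\mathbb{C}^{r_2}$ in which each complex factor is identified with $\mathbb{R}^2$ via real and imaginary parts (the same convention used for CM fields in \S\ref{s:innerproduct}), one has
\[
\|\alpha\|_2^2=\sum_{i=1}^{r_1}\sigma_i(\alpha)^2+\sum_{j=1}^{r_2}|\tau_j(\alpha)|^2 ,
\qquad
|N_{F/\mathbb{Q}}(\alpha)|=\prod_{i=1}^{r_1}|\sigma_i(\alpha)|\cdot\prod_{j=1}^{r_2}|\tau_j(\alpha)|^2 .
\]

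The key step is to rewrite $\|\alpha\|_2^2$ as a sum of exactly $n=n(F)$ nonnegative reals, by splitting each complex term into two equal halves: the $n$ numbers $\sigma_1(\alpha)^2,\dots,\sigma_{r_1}(\alpha)^2$ together with two copies of $\tfrac12|\tau_j(\alpha)|^2$ for each $j$ have sum $\|\alpha\|_2^2$ and product $4^{-r_2}|N_{F/\mathbb{Q}}(\alpha)|^2$. Applying the arithmetic--geometric mean inequality to these $n$ numbers yields
\[
\|\alpha\|_2^2\ \ge\ n\,\big(4^{-r_2}\,|N_{F/\mathbb{Q}}(\alpha)|^2\big)^{1/n}.
\]
Since $0\ne\alpha\in O_F$ gives $|N_{F/\mathbb{Q}}(\alpha)|\ge 1$, and since $2r_2\le n(F)$ gives $4^{r_2/n}=2^{2r_2/n}\le 2$, this collapses to $\|\alpha\|_2^2\ge n(F)/2$, which is \eqref{eq:theL2bound}; the final assertion about the $L^2$ ball is then immediate. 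There is no genuine obstacle here: the whole argument is the paragraph above, and the only subtlety is that the factor $4^{-r_2}$ produced by the chosen scaling of the norm at the complex places is exactly cancelled by the inequality $2r_2\le n(F)$.
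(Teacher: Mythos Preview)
Your proof is correct. Both you and the paper combine the arithmetic--geometric mean inequality with the integrality of $N_{F/\mathbb{Q}}(\alpha)$, but the organization differs. The paper first treats the totally complex case directly (where $\sum_\sigma|\sigma(\alpha)|^2=2\|\alpha\|_2^2$ over all $n(F)$ embeddings) and then reduces the general case to this one by passing to the totally complex extension $F'=F(\sqrt{-1})$ and comparing $\|\alpha\|_{2,F}$ with $\|\alpha\|_{2,F'}$. You instead handle every signature at once by splitting each complex contribution $|\tau_j(\alpha)|^2$ into two equal halves, so that AM--GM applies to exactly $n(F)$ terms; the factor $4^{-r_2}$ that appears is absorbed by $2r_2\le n(F)$. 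Your route avoids the auxiliary field extension and in fact gives the sharper intermediate bound $\|\alpha\|_2^2\ge n(F)\cdot 2^{-2r_2/n(F)}$, which specializes to $\|\alpha\|_2\ge\sqrt{n(F)}$ in the totally real case; the paper's reduction loses this refinement but has the virtue of isolating the totally complex case where the normalization is most transparent.
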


\begin{proof}  First suppose that $F$ is totally complex.  We apply the arithmetic geometric mean inequality to the set $\{|\sigma(\alpha)|^2\}_\sigma$ where $\sigma$ ranges over all embeddings $\sigma:F \to \mathbb{C}$.  This leads to  
$$1 \le \left(\mathrm{Norm}_{F/\mathbb{Q}}( \alpha)\right) ^{2/(n(F))} = \left (\prod_{\sigma} |\sigma(\alpha)|^2 \right )^{1/(n(F))} \le \frac{1}{n(F)} \sum_{\sigma} |\sigma(\alpha)|^2 = \frac{2}{n(F)}  || \alpha||_2^2$$
which shows (\ref{eq:theL2bound}) in the totally complex case.  Now suppose that $F$ is not totally complex.  Then $F' = F(\sqrt{-1})$ is a degree $2$ totally complex extension of $F$.  Let $|| \ ||_{2,F'}$ be the $L^2$ norm of $\mathbb{R} \otimes_{\mathbb{Q}} F'$.  Each real place of $F$ extends to a unique complex place of $F'$, while each complex place of $F$ extends to two complex places of $F'$.  So for $0 \ne \alpha \in O_F$ we have
$$||\alpha||^2_{2,F} \le ||\alpha||^2_{2,F'} \le 2||\alpha||^2_{2,F}.$$
This and (\ref{eq:theL2bound}) in the totally complex case show
$$||\alpha||_{2,F} \ge \frac{1}{\sqrt{2}} ||\alpha||_{2,F'} \ge \frac{1}{\sqrt{2}} \cdot \sqrt{n(F')/2} = \sqrt{n(F)/2}.$$
\end{proof}

\section{Lower bounds for $L^2$ radii in terms of discriminants}
\label{s:cyclocase}

The following result explains the comment after Question \ref{q:firstquest1} concerning lower bounds for the $L^2$ radius of the Voronoi cell of a number field in terms of the root discriminant of the field.

\begin{theorem}
\label{thm:volbound} Let   $||V_2(K)||_2$ be the $L^2$ radius of the $L^2$ Voronoi cell $V_2(K)$ of the integers $O_K$ of a number field $K$.  Write $||V_2(K)||_2 = d\sqrt{n(K)}$ for some real number $d$.  Then as $n(K) \to + \infty$ we have 
\begin{equation}
\label{eq:thebound}
\frac{1}{2}  \mathrm{log}(\delta(K)) - \frac{r_2(K)}{n(K)} \mathrm{log}(2) - \frac{1}{2}(1 + \mathrm{log}(2\pi)) \le \mathrm{log}(d) + O\left(\frac{\mathrm{log}(n(K))}{n(K)}\right)
\end{equation}
where $r_2(K) \le n(K)/2$ is the number of complex places of $K$.
\end{theorem}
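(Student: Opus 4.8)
The plan is to combine the volume identity \eqref{eq:delrel} with the elementary fact that $V_2(K)$ is contained in the $L^2$ ball of radius $\|V_2(K)\|_2$, and then extract the inequality by taking logarithms and applying Stirling's formula.

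First I would recall that, as noted after \eqref{eq:delrel}, $V_2(K)$ is the closure of a fundamental domain for the translation action of $O_K$ on $\mathbb{R}\otimes_{\mathbb{Q}}K\cong\mathbb{R}^{n(K)}$, so its Euclidean volume equals the covolume of $O_K$, namely $\mathrm{Vol}(V_2(K)) = 2^{-r_2(K)}\sqrt{\Delta(K)}$. On the other hand, every point of $V_2(K)$ has distance at most $\|V_2(K)\|_2 = d\sqrt{n(K)}$ from the origin, so $V_2(K)$ is contained in the Euclidean ball of that radius in $\mathbb{R}^{n(K)}$, whose volume is $\dfrac{\pi^{n/2}}{\Gamma(n/2+1)}\,(d\sqrt{n})^{n}$ with $n=n(K)$. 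Comparing volumes gives
\begin{equation}
\label{eq:volcompare}
2^{-r_2(K)}\sqrt{\Delta(K)}\ \le\ \frac{\pi^{n/2}}{\Gamma(n/2+1)}\,\bigl(d\sqrt{n}\bigr)^{n}.
\end{equation}

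Next I would take logarithms of \eqref{eq:volcompare}, use $\log\Delta(K)=n\log\delta(K)$, and divide through by $n$. This produces
\[
\frac{1}{2}\log\delta(K)-\frac{r_2(K)}{n}\log 2\ \le\ \frac{1}{2}\log\pi+\log d+\frac{1}{2}\log n-\frac{1}{n}\log\Gamma(n/2+1).
\]
Stirling's formula in the form $\log\Gamma(z+1)=z\log z-z+\tfrac{1}{2}\log(2\pi z)+O(1/z)$ applied at $z=n/2$ yields $\frac{1}{n}\log\Gamma(n/2+1)=\frac{1}{2}\log n-\frac{1}{2}\log 2-\frac{1}{2}+O\!\left(\frac{\log n}{n}\right)$. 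Substituting this in, the two $\frac{1}{2}\log n$ terms cancel, and after collecting the constants $\frac{1}{2}\log\pi+\frac{1}{2}\log 2+\frac{1}{2}=\frac{1}{2}(1+\log(2\pi))$ one arrives exactly at \eqref{eq:thebound}.

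I do not expect any genuine obstacle here: the argument is a volume comparison plus bookkeeping. The only point requiring mild care is the asymptotic analysis of $\log\Gamma(n/2+1)$ and verifying that the error terms coming from Stirling (and, if one wishes to be fastidious, the fact that the closure of a fundamental domain has the same volume as the domain, so that replacing $V_2(K)$ by its closure is harmless) indeed collapse into the stated $O\!\left(\frac{\log n(K)}{n(K)}\right)$ term. One should also note that the hypothesis $r_2(K)\le n(K)/2$ is automatic and is only used to keep $\frac{r_2(K)}{n(K)}\log 2$ bounded, so no further input about $K$ is needed.
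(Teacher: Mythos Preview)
Your proposal is correct and follows essentially the same route as the paper's proof: both compare the covolume $2^{-r_2(K)}\sqrt{\Delta(K)}$ of $O_K$ with the volume of the $n(K)$-dimensional ball of radius $d\sqrt{n(K)}$, take logarithms, and apply Stirling's formula to extract the inequality. The only cosmetic difference is that you use the more refined form of Stirling (with the $\tfrac{1}{2}\log(2\pi z)$ term) and divide by $n$ before simplifying, whereas the paper uses the cruder $\log\Gamma(z)=z\log z-z+O(\log z)$ and rearranges at the end; both yield the same $O(\log n/n)$ error.
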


\begin{proof}     The volume of $V_2(K)$ is
$2^{-r_2(K)} \Delta(K)^{1/2}$. For simplicity let $n = n(K)$ be the degree of $K$, and write $||V_2(K)||_2 = d\sqrt{n}$ for some $d \in \mathbb{R}$.  Then $V_2(K)$ has volume
bounded above by the volume of an $n$-dimensional ball of radius $d \sqrt{n}$.  This leads to 
\begin{equation}
\label{eq:vollower}
2^{-r_2(K)} \Delta(K)^{1/2} \le \left (d\sqrt{n}\right )^{n} \frac{ \pi^{n/2}}{\Gamma(\frac{n}{2} + 1)}.
\end{equation}
 Stirling's formula says 
 $$\mathrm{log}(\Gamma(z)) = z \cdot \mathrm{log}(z) - z + O(\mathrm{log}(z))$$
  as $z \to +\infty$.  Taking logarithms of both sides of (\ref{eq:vollower}) and applying Stirling's formula shows
 \begin{equation}
 \label{eq:lower2}
 -r_2(K) \mathrm{log}(2) + \frac{1}{2} \mathrm{log}(\Delta(K)) \le n\left(\mathrm{log}(d) + \frac{1}{2}(1 + \mathrm{log}(2 \pi))\right) + O(\mathrm{log}(n)).
 \end{equation}
 Rearranging (\ref{eq:lower2})  shows (\ref{eq:thebound}).
 \end{proof}

For lack of a suitable reference, we  conclude with a proof of the statement made after Theorem \ref{thm:tauthm1}  that cyclotomic fields are not sufficient to prove an upper bound on $\nu_2$ in Theorem \ref{thm:tauthm1} that is smaller than $1$.

 \begin{corollary}
 \label{cor:cyclocor}
 Suppose $\epsilon > 0$. For all but finitely many cyclotomic fields $K$ one has
\begin{equation}
\label{eq:lowerboundnice} \delta(K) \ge n(K)^{1-\epsilon}\quad \mathrm{and}\quad ||V_2(K)||_2 \ge n(K)^{1 - \epsilon}.
\end{equation}
\end{corollary}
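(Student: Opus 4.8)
The plan is to deduce the corollary from Theorem \ref{thm:volbound} together with the classical asymptotics for discriminants of cyclotomic fields. First I would recall that for $K = \mathbb{Q}(\zeta_M)$ one has $n(K) = \varphi(M)$ and an exact formula for $\Delta(K)$, namely $|\Delta(\mathbb{Q}(\zeta_M))| = M^{\varphi(M)} / \prod_{p \mid M} p^{\varphi(M)/(p-1)}$, so that
\begin{equation}
\label{eq:cyclodelta}
\mathrm{log}(\delta(K)) = \mathrm{log}(M) - \sum_{p \mid M} \frac{\mathrm{log}(p)}{p-1}.
\end{equation}
The key point is that the subtracted sum is $O(\mathrm{log}\log M)$ while $\mathrm{log}(M) \ge \mathrm{log}(n(K))$, and moreover $n(K) = \varphi(M) \ge c\, M/\mathrm{log}\log M$, so $\mathrm{log}(M) = \mathrm{log}(n(K)) + O(\mathrm{log}\log n(K))$. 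Hence $\mathrm{log}(\delta(K)) = \mathrm{log}(n(K)) + O(\mathrm{log}\log n(K)) = (1 + o(1))\mathrm{log}(n(K))$ as $M \to \infty$, which immediately gives the first inequality $\delta(K) \ge n(K)^{1-\epsilon}$ for all but finitely many cyclotomic $K$ (note there are only finitely many $M$ with $\varphi(M)$ below any given bound, so ``all but finitely many cyclotomic fields'' is the same as ``$M$ sufficiently large'').

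For the second inequality I would feed this into Theorem \ref{thm:volbound}. Writing $||V_2(K)||_2 = d\sqrt{n(K)}$, that theorem gives
\begin{equation}
\label{eq:plugin}
\mathrm{log}(d) \ge \frac{1}{2}\mathrm{log}(\delta(K)) - \frac{r_2(K)}{n(K)}\mathrm{log}(2) - \frac{1}{2}(1 + \mathrm{log}(2\pi)) - O\!\left(\frac{\mathrm{log}(n(K))}{n(K)}\right).
\end{equation}
Since cyclotomic fields $\mathbb{Q}(\zeta_M)$ with $M > 2$ are totally complex, $r_2(K)/n(K) = 1/2$, so the middle terms on the right contribute only a bounded constant $-\tfrac12\mathrm{log}(2) - \tfrac12(1+\mathrm{log}(2\pi))$. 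Combining with $\mathrm{log}(\delta(K)) = (1+o(1))\mathrm{log}(n(K))$ we get $\mathrm{log}(d) \ge (\tfrac12 + o(1))\mathrm{log}(n(K))$, hence $||V_2(K)||_2 = d\sqrt{n(K)} = n(K)^{1/2 + 1/2 + o(1)} = n(K)^{1 + o(1)}$, which yields $||V_2(K)||_2 \ge n(K)^{1-\epsilon}$ for $M$ large enough.

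The main obstacle, such as it is, is bookkeeping rather than anything deep: one must verify that the error terms genuinely are $o(\mathrm{log}(n(K)))$, which reduces to the two standard estimates $\sum_{p\mid M}\mathrm{log}(p)/(p-1) = O(\mathrm{log}\log M)$ (crudely, $\sum_{p \mid M} \mathrm{log}(p) \le \mathrm{log}(M)$ suffices, but one wants the sharper bound to be safe) and $\varphi(M) \gg M/\mathrm{log}\log M$, together with the trivial observation that $\{M : \varphi(M) \le B\}$ is finite for each $B$ so that a bound holding for all large $M$ holds for all but finitely many cyclotomic fields. One should also note the edge cases $M \in \{1,2\}$ give $K = \mathbb{Q}$, which is among the finitely many exceptions. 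No step requires constructing anything; everything follows by inserting known formulas into Theorem \ref{thm:volbound} and taking $n(K) \to \infty$.
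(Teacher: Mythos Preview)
Your argument is correct. Both you and the paper reduce the second inequality to the first via Theorem~\ref{thm:volbound}, so the only real content is the discriminant estimate $\delta(K) \ge n(K)^{1-\epsilon}$ for large cyclotomic $K$, and here your route differs from the paper's.

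You prove the stronger asymptotic $\log\delta(K) = (1+o(1))\log n(K)$ by invoking two analytic facts: the Mertens-type bound $\sum_{p\mid M}\log(p)/(p-1) = O(\log\log M)$ and the classical lower bound $\varphi(M) \gg M/\log\log M$. The paper instead gives a self-contained finiteness argument: writing $\log\delta(K) - (1-\epsilon)\log m = \sum_i e(p_i,r_i)$ with $e(p,r) = (\epsilon r - 1/(p-1))\log p$, it observes that for each $B$ only finitely many pairs $(p,r)$ have $e(p,r)\le B$, so for $m$ large enough one term alone outweighs the (bounded) total negative contribution. The paper's approach is more elementary in that it needs no input from analytic number theory beyond the discriminant formula itself; your approach buys a sharper quantitative statement (the $o(1)$ error) at the cost of citing those standard estimates. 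Either is perfectly adequate here.
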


\begin{proof}
The second inequality in (\ref{eq:lowerboundnice}) follows from the first one and Theorem \ref{thm:volbound}. We now prove the first inequality in (\ref{eq:lowerboundnice}). Suppose $K = \mathbb{Q}(\zeta_m)$
for a primitive $m^{th}$ root of unity $\zeta_m$. Write $m = \prod_i p_i^{r_i}$ for some distinct primes $p_i$ and integers $r_i \ge 1$.  The classical formula for the discriminant of $K$ along with $n(K) = \phi(m) = \prod_i p^{r_i-1} (p_i - 1)$ give
$$\mathrm{log}(\delta(K)) = \frac{\mathrm{log}(\Delta(K))}{n(K)} = \sum_i \left(r_i - \frac{1}{p_i - 1}\right) \mathrm{log}(p_i).$$
Since $n(K) = \phi(m) < m$ we find that
\begin{equation}
\label{eq:almostdonefin}
\mathrm{log}(\delta(K)) - (1 - \epsilon)\cdot \mathrm{log}(n(K)) > \mathrm{log}(\delta(K))- (1 - \epsilon)\cdot \mathrm{log}(m)
= \sum_i e(p_i,r_i)
\end{equation}
where
$$e(p_i,r_i) = \left(\epsilon r_i - \frac{1}{p_i - 1}\right)\cdot \mathrm{log}(p_i).$$
For a given $\epsilon > 0$ and a real number $B$, let $M(B)$ be the set of all pairs $(p,r)$ consisting of a prime $p$ and an integer $r \ge 1$ such
that $e(p,r) \le B$.  Then $M(B)$ is finite.  Let $C$ be the sum of $e(p,r)$ over all $(p,r) \in M(0)$.  If $m$ is sufficiently large, some $(p_i,r_i)$ arising from the factorization of $m$ must lie outside of the finite set $M(-C)$.  Then the right hand side of (\ref{eq:almostdonefin}) is positive, which completes the proof.
\end{proof}

\bibliographystyle{plain}

 \end{document}